\theoremstyle{remark}
\theoremstyle{definition}
\newtheorem{theorem}{Theorem}
\newtheorem{proposition}{Proposition}
\newtheorem{corollary}{Corollary}
\newtheorem{lemma}[theorem]{Lemma}
\newtheorem{remark}{Remark}
\newtheorem{definition}{Definition}
\def \k{\bm{k}}
\def \v{\bm{v}}
\def \w{\bm{w}}
\def \x{\bm{x}}
\def \y{\bm{y}}
\def\vett#1{\bm{#1}}
\def\lip{Lipschitz}
\def\mi{\hat{\mu}( \k)} 
\def \ni{\hat{\nu}(\k)}
\def \fspa{f^{(\alpha)}_{s,p}}
\def \supk{\sup_{\k \in \mathbb{R}^d\backslash \{0\}}}
\def \intTnormd{\dfrac{1}{|T|^d}\int_{[0,T]^d}}
\def \intnormpiN{\dfrac{1}{|T|^d}\int_{[0,T]^d}}
\def \intaTnormd{\dfrac{1}{|\gamma T|^d}\int_{[0,\gamma T]^d}}
\def \intaTnormdue{\dfrac{1}{|T|^2}\int_{[0,T]^2}}
\def \intaTnormdued{\dfrac{1}{|T|^d}\int_{[0,T]^d}}
\newcommand{\fer}[1]{(\ref{#1})}
\def\erre{\mathbb{R}}
\def\R{\mathbb{R}}
\def\probRN{\mathcal{P}(\erre^d)}
\def\probRNm{\mathcal{P}_m(\R^d)}
\def\borelN{\mathcal{B}(\erre^d)}
\def\probRNN{\mathcal{P}(\erre^d\times\erre^d)}
\def\normadue#1{|#1|}
\def\interrenn#1{\int_{\erre^{d}\times \erre^{d}}#1}
\def\planmunu{\Pi(\mu,\nu)}
\def\mmu{\bm{m}_{\mu}}
\def\nnu{\bm{m}_{\nu}}
\def\ttrasl#1{#1_{\bm{\tau}}}
\def\vtrasl#1{#1_{\bm{v}}}
\def\wtrasl#1{#1_{\bm{w}}}
\begin{document}

\title[The equivalence of Fourier-based and Wasserstein metrics]{The Equivalence of Fourier-based and Wasserstein Metrics on Imaging Problems}

\author{G. Auricchio, A. Codegoni, S. Gualandi, G. Toscani, M. Veneroni}
\address{Department of Mathematics, University of Pavia, 
via Ferrata 1,
Pavia, 27100 Italy}
\email{gennaro.auricchio01@universitadipavia.it, andrea.codegoni01@universitadipavia.it, stefano.gualandi@unipv.it, giuseppe.toscani@unipv.it, marco.veneroni@unipv.it.}

\maketitle

\begin{abstract}
We investigate properties of some extensions of a class of Fourier-based probability metrics, originally introduced to study convergence to equilibrium for the solution to the spatially homogeneous Boltzmann equation. 
At difference with the original one, the new Fourier-based metrics are well-defined also for probability distributions with different centers of mass, and for discrete probability measures supported over a regular grid. 
Among other properties, it is shown that, in the discrete setting, these new Fourier-based metrics are equivalent either to the Euclidean-Wasserstein distance $W_2$, or to the Kantorovich-Wasserstein distance $W_1$, with explicit constants of equivalence. Numerical results  then show that in benchmark problems of image processing,  Fourier metrics provide a better runtime with respect to Wasserstein ones.  
\end{abstract}
\vskip 3mm

\keywords{Fourier-based Metrics; Wasserstein Distance; Fourier Transform; Image analysis.}
\vskip 2mm
{ AMS Subject Classification: 60A10, 60E15, 42A38, 68U10.}

\section{Introduction}

In computational applied mathematics, numerical methods based on Wasserstein distances achieved a leading role over the last years. Examples include the comparison of histograms in higher dimensions \cite{ling2007efficient,auricchio2018computing,bassetti2018computation},  image retrieval \cite{rubner2000earth}, image registration \cite{haker2004optimal,Bonneel2019}, or, more recently, the computations of barycenters among images \cite{auricchio2019computing,cuturi2014fast}. 
Surprisingly, the possibility to identify the cost function in a Wasserstein distance, together with the possibility of representing images as histograms, led to the definition of classifiers able to mimic the human eye \cite{rubner2000earth,pele2009fast,peyre2019computational}.

More recently, metrics which are able to compare at best probability distributions were introduced and studied in connection with  machine learning, where testing the efficiency of new classes of loss functions for neural networks training has become increasingly important.  In this area, the Wasserstein distance often turns out to be the appropriate tool \cite{arjovsky2017wasserstein,adler2018banach,frogner2015learning}. Its main drawback, though, is that it suffers from high computational complexity. For this reason, attempts to use other metrics, which require a lower computational cost while maintaining a good approximation, have been object of recent research \cite{wavelet}. There,  the theory of approximation in the space of wavelets was the main mathematical tool.

Following the line of thought of \cite{wavelet}, we consider here an alternative to the approximation in terms of wavelets, which is furnished by metrics based on the Fourier transform. In terms of computational complexity, the price to pay for  a dimension $N\gg 1$ of the data changes  from a time $O(N)$ to the time $O(N \log N)$ required to evaluate the fast Fourier transform.

While this represents a worsening, with respect to the use of wavelets, in terms of computational complexity, there is an effective improvement with respect to the computational complexity required to evaluate Wasserstein-type metrics, which is of the order $O(N^3\log N)$. Furthermore, from the point of view of the important questions related to the comparison of these metrics with Wasserstein metrics in problems motivated by real applications, we prove in this paper that in the case of probability measures supported on a bounded domain, one has a precise and explicit evaluation of the constants of equivalence among these Fourier-based metrics and the Wassertein ones, a result which is not present in \cite{wavelet}. 

The Fourier-based metrics considered in this paper were introduced in \cite{gabetta1995metrics}, in connection with the study of the trend to equilibrium for solutions of the spatially homogeneous Boltzmann equation for Maxwell molecules. 
Since then, many applications of these metrics have followed in both kinetic theory and  probability \cite{toscani1999probability,pulvirenti2004asymptotic,carlen1999propagation,
goudon2002fourier,bisi2006decay,carlen2000central,Toscani2007}. All these problems deal with functions supported on the whole space $\erre^d$, with $d \ge 1$, that exhibit a suitable decay at infinity which guarantees the existence of a suitable number of moments.

Given two probability measures $\mu,\nu\in\mathcal{P}(\erre^d)$, $ d \ge 1$, and a real parameter $s>0$, the Fourier-based metrics $d_s$ considered in \cite{gabetta1995metrics} are given by
\begin{equation}\label{fm}
    d_s(\mu,\nu) := \sup_{\k\in \erre^d\setminus\{0\}} \frac{|\widehat{\mu}(\k)-\widehat{\nu}(\k)|}{|\k|^s},
\end{equation}
where $\widehat{\mu}$ and $\widehat{\nu}$ are the Fourier transforms of the measures $\mu$ and $\nu$, respectively. As usual, given a probability measure $\mu \in \probRN$, the Fourier transform of $\mu$ is defined by
    \[
    \hat{\mu}(\k):=\int_{\mathbb{R}^d}e^{-i\k\cdot\x}d\mu(\x).
    \] 
These metrics, for $s \ge 1$, are well-defined under the further assumption of boundedness  and equality of some moments of the probability measures. Indeed, a necessary condition for $d_s$ to be finite, is that moments up to $[s]$ (the integer part of $s$) are equal for both measures  \cite{gabetta1995metrics}.

In dimension $d=1$, similar metrics were introduced a few years later by Baringhaus and Gr\"ubel in connection with the characterization of convex combinations of random variables \cite{Baringhaus1997}. Given two probability measures $\mu,\nu\in\mathcal{P}(\erre^d)$, $ d \ge 1$, and two  real parameters $s>0$ and $p \ge 1$, the multi-dimensional version of these Fourier-based metrics reads
\begin{equation}\label{baring}
D_{s,p}(\mu,\nu):=\left(\int_{\erre^d}\dfrac{|\widehat{\mu}(\k)-\widehat{\nu}
(\k)|^p}{|\k|^{(ps+d)}}d\k \right)^{{1}/{p}}.
\end{equation}
The metrics defined by \fer{fm} and \fer{baring} belong to the set of ideal metrics \cite{zolotarev}, and have been shown to be equivalent to other common probability distances \cite{gabetta1995metrics,toscani1999probability}, including the Wasserstein distance $W_2(\mu,\nu)$ \cite{Toscani2007}, given by
    \begin{equation}\label{w2}
    W_2(\mu,\nu):=\inf_{\pi \in \Pi(\mu,\nu)}\bigg\{ \interrenn |\bm x-\bm y|^2\, d\pi(\bm x,\bm y) \bigg\}^{1/2},
    \end{equation} 
where the infimum is taken on the set $ \Pi(\mu,\nu)$  of all probability measures on $\erre^d \times \erre^d$ with marginal densities $\mu$ and $\nu$.  However, in dimension $d>1$ the constants of equivalence are not explicit \cite{Toscani2007}, so that it is difficult to establish a comparison between these metrics' efficacy in applications.
 
An unpleasant aspect related to the application of the previous Fourier-based distances is related to its finiteness, that requires, for high values of $s$, a sufficiently high number of equal moments for the underlying probability measures. In the context of kinetic equations of Boltzmann type, where conservation of momentum and energy of the solution is a consequence of the microscopic conservation laws of binary interactions among particles, this requirement on $d_s$, with $2<s<3$, is clearly not restrictive.  However, in order to apply the Fourier-based metrics outside of the context of kinetic equations, this requirement appears unnatural. To clarify this point, let us consider the case in which we want to compare the distance between two images. If we take two grey scale images and model them as probability distributions, there is no reason why these distributions possess the same expected value. The simplest example is furnished by two images consisting of a black dot, each one centered in a different point of the region, that can be modeled as two Dirac delta functions centered in two different points.

In this paper we improve the existing results concerning the evaluation of the constants in the equivalence relations between the Fourier-based metrics and the Wasserstein one, in a relevant setting with respect to applications. This equivalence is related to the comparison of two discrete measures and it is based on the properties of the Fourier transform in the discrete setting. To this extent, we consider a new version of these metrics, the \emph{periodic Fourier-based metrics}, that play the role of the metrics \fer{fm} and \fer{baring} in the discrete setting.
With our results, we show that the new family of Fourier-based metrics represents a fruitful alternative  to  the Wasserstein metrics, both from the theoretical and the computational points of view.

To weaken the restriction about moments, we further consider a variant of the Fourier metric $d_2$ that remains well-defined even for probability measures with different mean values.

The content of this paper is as follows. In Section \ref{recallot} we introduce the notations and the basic concepts of measure theory and optimal transport. Furthermore,  we define the Fourier-based metrics, we recall their main properties, and we introduce our extension. Then, in view of applications, in Section \ref{discreto}, we consider a discrete setting and we define and study the properties of the new family of  {periodic Fourier-based metrics}, highlighting their explicit equivalence with the Wasserstein distance in various cases. Section \ref{Numericalresults} presents numerical results obtained comparing our implementation of the \emph{periodic Fourier-based metrics} with the Wasserstein metrics as implemented in the POT library \cite{flamary2017pot}. The concluding remarks are contained in Section \ref{fine}. 

\section{An extension of Fourier-based metrics}
\label{recallot}
In what follows, we briefly review some basic notions of optimal transport, together with the definition and some properties of Wassertein and  Fourier-based metrics. The final goal is to extend the definition of the metrics \fer{fm} and \fer{baring} for the particular case $s=2$, which allows for a direct and fruitful comparison between the Fourier-based metrics and the Wasserstein metric $W_2$ defined in \fer{w2}. In what follows, we only present  the notions that are necessary for our purpose. For a deeper insight on optimal transport, we refer the reader to \cite{villani2008oan,santambrogio2015optimal,ambrosio2008gradient,ambrosio2013user}. Likewise, we address the interested reader to  \cite{Toscani2007} for an exhaustive review of the properties  of the Fourier-based metrics and their connections with other metrics used in probability theory. 

We work on the Euclidean space $\R^d$, endowed with the Borel $\sigma-$algebra $\borelN$. We use bold letters to denote vectors of $\R^d$. If $\bm x \in \R^d$, then $x_i$ denotes its $i$-th coordinate. Given $\bm x, \bm y \in \R^d$, $\langle \bm x,\bm y\rangle = \sum_{i=1}^n x_iy_i$ is their scalar product and $|\bm x| = \langle \bm x,\bm x\rangle^{1/2} $  is the Euclidean norm (or modulus) of $\bm x$.

The set of probability measures on $\R^d$ is denoted by $\probRN$. 
For all $m\in \mathbb{N}$ we denote by $\probRNm$ the set of probability measures with finite moments up to order $m$
\[    \probRNm:= \bigg\{ \mu\in \probRN :
    \int_{\R^d} \bm x^\beta \,d\mu(\bm{x}) < +\infty,\ 
    \forall \beta\in \mathbb{N}^d,\ |\beta|\leq m 
    \bigg\}.
\]
Given $\mu \in \probRN$ and a Borel  map $f:\R^d \to \R^d$, then the image measure (or push-forward) of $\mu$ by $f$ is $f_\# \mu \in \probRN$, given by $f_\# \mu (A) = \mu ( f^{-1}(A))$ for  all $A \in \borelN$. Equivalently, for every continuous compactly supported function $\phi$ on $\R^d$, it holds
\[
	\int_{\R^d}\phi(y)\,d(f_\# \mu)(y) =  \int_{\R^d}\phi(f(x))\,d\mu(x).
\]
 
Our first goal is to define the Fourier-based metrics $d_s$, in the range $1 < s \le 2$, on $\probRN$.

\begin{definition}
Given $\mu \in \mathcal{P}_1(\R^d)$, we say that
\[
	\mmu =  \int_{\R^d} \bm x\, d\mu(\bm x)
\]
is the \emph{center} of $\mu$.
\end{definition}

The center of a measure $\mu$ can be moved by resorting to a translation. Given $\mu\in \mathcal{P}_1(\R^d)$ and $\bm{\tau} \in \R^d$, we define the translated measure $\mu_{\bm\tau} \in \mathcal{P}_1(\R^d)$ by
\[
	\mu_{\bm\tau} = S^{\bm \tau}_\# \mu,\quad \text{where}\quad S^{\bm \tau}(\bm x)=\bm x + \bm \tau.
\]
%



\begin{lemma}
    \label{momentiuguali}
    Given $\mu, \nu \in \mathcal{P}_1(\R^d)$, there  exists a unique vector $\bm\tau \in \R^d$ such that
    \[
    		\mmu=\bm{m}_{\nu_{\bm\tau}}.
    \]
 \end{lemma}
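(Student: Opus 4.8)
The plan is to reduce the statement to an explicit formula for the center of a translated measure, after which existence and uniqueness are immediate. First I would compute $\bm{m}_{\nu_{\bm\tau}}$ directly from the definition $\nu_{\bm\tau} = S^{\bm\tau}_\# \nu$, using the change-of-variables (push-forward) identity recalled above with the identity map as test function. Applied coordinatewise this gives
\[
	\bm{m}_{\nu_{\bm\tau}} = \int_{\R^d} \bm x \, d\big(S^{\bm\tau}_\# \nu\big)(\bm x) = \int_{\R^d} S^{\bm\tau}(\bm x)\, d\nu(\bm x) = \int_{\R^d} (\bm x + \bm\tau)\, d\nu(\bm x) = \nnu + \bm\tau,
\]
where the last equality uses that $\nu$ is a probability measure, so $\int_{\R^d} d\nu = 1$ and the constant $\bm\tau$ integrates to itself.

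With this formula in hand, the requirement $\mmu = \bm{m}_{\nu_{\bm\tau}}$ becomes the affine equation $\mmu = \nnu + \bm\tau$ in the unknown $\bm\tau \in \R^d$. Since the map $\bm\tau \mapsto \nnu + \bm\tau$ is a bijection of $\R^d$ (a translation), this equation has the unique solution $\bm\tau = \mmu - \nnu$, which is well-defined precisely because $\mu,\nu \in \mathcal{P}_1(\R^d)$ guarantees that both centers $\mmu$ and $\nnu$ exist as finite vectors. This establishes both existence and uniqueness in one stroke.

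There is essentially no hard part here; the argument is bookkeeping once the center of the translate is identified. The only point requiring a word of care is that the push-forward identity is stated for continuous compactly supported test functions, whereas I apply it to the unbounded function $\bm x \mapsto \bm x$. This is legitimate because each coordinate function $x_i$ is $\nu$-integrable (as $\nu \in \mathcal{P}_1(\R^d)$), so one first applies the identity to truncations of $x_i$ and passes to the limit by dominated convergence to recover the formula $\bm{m}_{\nu_{\bm\tau}} = \nnu + \bm\tau$. This extension is routine and is the only step that is not purely formal.
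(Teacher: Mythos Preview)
Your proof is correct and follows essentially the same approach as the paper: compute $\bm{m}_{\nu_{\bm\tau}} = \nnu + \bm\tau$ via the push-forward definition and read off $\bm\tau = \mmu - \nnu$. Your version is in fact slightly more complete, since you address uniqueness explicitly and justify the use of the unbounded test function $\bm x \mapsto \bm x$, points the paper leaves implicit.
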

\begin{proof}
    Let $\bm\tau=\mmu-\nnu$, then 
    \[
    		\bm{m}_{\nu_{\bm\tau}} = \int_{\R^d}\bm x d\ttrasl{\nu}(\bm x) 
    =\int_{\R^d}(\bm{x+\tau})d\nu(\bm x)=\nnu+\bm \tau=\mmu.
    \]
\end{proof}{}

Let us recall now the definition of transport plan, and the consequent definition of Wasserstein Distance.

\begin{definition}[Transport plan]
    Given two probability measures $\mu,\nu\in \probRN$, a vector $\pi \in \probRNN$ is called a transport plan between $\mu$ and $\nu$ if its marginals coincide with $\mu,\nu$, that is  
    \begin{eqnarray}
    \pi(A\times \erre^d) &=& \mu(A) \quad \quad \forall A \in \borelN,\\
    \pi(\erre^d\times B) &=& \nu(B) \quad \quad \forall B \in \borelN.
    \end{eqnarray}
We denote by $\planmunu$ the set of all transport plans between $\mu$ and $\nu$.
\end{definition}

\begin{definition}[Wasserstein distance]
    Given $p\in\mathbb{N}$ and $\mu,\nu \in \mathcal{P}_p(\R^d)$, the Wasserstein distance of order $p$ between $\mu$ and $\nu$ is defined as
    \begin{equation}
    W_p(\mu,\nu):=\inf_{\pi \in \Pi(\mu,\nu)}\bigg\{ \interrenn |\bm x-\bm y|^p\, d\pi(\bm x,\bm y) \bigg\}^{1/p},
    \label{wass}
    \end{equation}
where $|\cdot|$ is a norm defined in $\erre^d$.
\end{definition}

In this paper, we consider only the Euclidean norm, and we focus on Wasserstein distances with exponents $p=1$ and $p=2$, namely
\begin{align}
W_1(\mu,\nu) & :=\inf_{\pi \in \Pi(\mu,\nu)}\bigg\{ \interrenn |\bm x-\bm y|\, d\pi(\bm x,\bm y) \bigg\},\\
\label{eq:W2} W_2(\mu,\nu) & :=\inf_{\pi \in \Pi(\mu,\nu)}\bigg\{ \interrenn |\bm x-\bm y|^2\, d\pi(\bm x,\bm y) \bigg\}^{1/2}.
\end{align}
The $W_2$ metric satisfies an explicit translation property  (Remark 2.19, \cite{peyre2019computational} ). We give below a short proof of this property.

\begin{lemma}
    \label{lemmaWtrasl}
    Let $\mu,\nu \in \mathcal{P}_2(\R^d)$, with centers $\mmu$ and $\nnu$, respectively. For any given pair of vectors $\bm{v},\bm{w} \in \R^d$  we have 
    \begin{equation}
    W_2(\vtrasl{\mu},\wtrasl{\nu})^2=W_2(\mu,\nu)^2+\normadue{\bm v-\bm w}^2+2\langle\bm v-\bm w,\mmu-\nnu \rangle.
    \end{equation}
    In addition, if we choose $\bm v=-\mmu$ and $\bm w=-\nnu$ it holds
    \begin{equation}
    W_2(\mu_{-\mmu},\nu_{-\nnu})^2 = W_2(\mu,\nu)^2 - \normadue{\mmu -\nnu}^2.
    \end{equation}
\end{lemma}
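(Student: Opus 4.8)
The plan is to exploit the natural bijection between transport plans induced by translation. First I would introduce the shift map $\Phi:\R^d\times\R^d \to \R^d\times\R^d$, $\Phi(\bm x,\bm y)=(\bm x+\bm v,\bm y+\bm w)$, and observe that $\pi \mapsto \Phi_\#\pi$ maps $\planmunu$ bijectively onto $\Pi(\vtrasl{\mu},\wtrasl{\nu})$: the first marginal of $\Phi_\#\pi$ is $S^{\bm v}_\#\mu=\vtrasl{\mu}$ and the second is $\wtrasl{\nu}$, while the inverse map is the opposite translation. This reduces the whole statement to tracking how the quadratic cost transforms under $\Phi$.

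Next I would compute, for a fixed $\pi\in\planmunu$, the transported cost
\begin{equation*}
\interrenn{\normadue{\bm x'-\bm y'}^2\,d(\Phi_\#\pi)(\bm x',\bm y')} = \interrenn{\normadue{(\bm x-\bm y)+(\bm v-\bm w)}^2\,d\pi(\bm x,\bm y)}.
\end{equation*}
Expanding the square produces three terms: the original cost $\interrenn{\normadue{\bm x-\bm y}^2\,d\pi}$, the constant $\normadue{\bm v-\bm w}^2$, and a cross term $2\langle \bm v-\bm w,\int(\bm x-\bm y)\,d\pi\rangle$. The point requiring care is that the cross term is \emph{independent} of $\pi$: since the marginals of $\pi$ are $\mu$ and $\nu$, one has $\int(\bm x-\bm y)\,d\pi = \mmu-\nnu$, so the cross term equals $2\langle\bm v-\bm w,\mmu-\nnu\rangle$.

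Finally, because the offset $\normadue{\bm v-\bm w}^2+2\langle\bm v-\bm w,\mmu-\nnu\rangle$ is a constant not depending on $\pi$, taking the infimum over the bijectively matched families of plans commutes with adding this constant, which gives the claimed identity. The second assertion follows by the substitution $\bm v=-\mmu$, $\bm w=-\nnu$: here $\bm v-\bm w = \nnu-\mmu$, so $\normadue{\bm v-\bm w}^2=\normadue{\mmu-\nnu}^2$ while the cross term becomes $-2\normadue{\mmu-\nnu}^2$, and the two combine to $-\normadue{\mmu-\nnu}^2$. The main obstacle here is conceptual rather than computational: one must justify that $\Phi_\#$ really is a bijection between the two sets of transport plans and that the cross term is plan-independent, so that the infima on the two sides correspond exactly; the remaining algebra is routine.
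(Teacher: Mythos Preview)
Your proposal is correct and follows essentially the same approach as the paper: push forward an arbitrary transport plan by the joint translation $(S^{\bm v},S^{\bm w})$, expand the quadratic cost, and use the marginal constraints to see that the cross term equals $2\langle\bm v-\bm w,\mmu-\nnu\rangle$ independently of the plan. The only cosmetic difference is that you invoke the bijection $\Phi_\#$ between $\planmunu$ and $\Pi(\vtrasl{\mu},\wtrasl{\nu})$ to take the infimum in one stroke, whereas the paper establishes the identity via two inequalities, applying the computation first to an optimal plan for $(\mu,\nu)$ and then to one for $(\vtrasl{\mu},\wtrasl{\nu})$.
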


\begin{proof}
    Given a transport plan $\pi \in \planmunu$, we consider the transport plan
    \[
    		\tilde{\pi}:=(S^{\bm v},S^{\bm w})_\#\pi,
    \]
    where $S^{\bm v}(\bm x)=\bm x + \bm v$, $S^{\bm w}(\bm y)=\bm y + \bm w$.  $\tilde \pi$ is a transport plan between the translated measures $\mu_{\bm{v}}$ and $\nu_{\bm{w}}$. Then, by definition of push-forward, we get
    \begin{eqnarray*}
    &&\hspace{-1cm}\interrenn{|\x-\y|^2d\tilde{\pi}(\x,\y)}\\
    &=&\interrenn{|(\x+\bm v)-(\y+\bm w)|^2d{\pi}(\x,\y)}\\
    &=&\interrenn{(|\x-\y|^2+|\bm v-\bm w|^2+2\langle\x-\y,\bm v-\bm w\rangle)d{\pi}(\x,\y)}\\
    &=& \interrenn{|\x-\y|^2d{\pi}(\x,\y)} +
    |\bm v-\bm w|^2+
    2\langle \mmu -\nnu,\bm v-\bm w\rangle.
    \end{eqnarray*}
    
    \noindent If $\pi$ is an optimal transport plan between $\mu$ and $\nu$, we have
    \begin{eqnarray*}
    \nonumber W_2(\vtrasl{\mu},\wtrasl{\nu})^2&\leq&\interrenn{|\x-\y|^2d\tilde{\pi}(\x,\y)}\\
    &=&W_2(\mu,\nu)^2+|\bm v-\bm w|^2+2\langle\bm v-\bm w,\mmu -\nnu\rangle.
    \end{eqnarray*}
    By repeating the previous argument with an optimal transport plan between $\vtrasl{\mu},\ \wtrasl{\nu}$, we find
    \begin{align*}
    W_2(\vtrasl{\mu},\wtrasl{\nu})^2 &=\interrenn{|\x-\y|^2d\pi(\x,\y)}+|\bm v-\bm w|^2
    	+2\langle\bm v-\bm w,\mmu -\nnu\rangle\\
		& \geq W_2(\mu,\nu)^2+|\bm v-\bm w|^2+2\langle\bm v-\bm w,\mmu -\nnu\rangle.
    \end{align*}
    Hence, we can conclude
    \begin{equation*}
    W_2(\vtrasl{\mu},\wtrasl{\nu})^2=W_2(\mu,\nu)^2+|\bm v-\bm w|^2+2\langle\bm v-\bm w,\mmu -\nnu\rangle.
    \end{equation*}
    {}
\end{proof}

The idea of using translation operators to compute the distance of probability measures with different centers can be used to properly modify the Fourier-based metrics $d_s$ and $D_{s,p}$ defined in \fer{fm} and \fer{baring}. Indeed, as briefly discussed in the introduction, the case $s\ge 1$ requires the probability measures to satisfy the further condition given below \cite{gabetta1995metrics}.
\begin{proposition}[Proposition 2.6, \cite{Toscani2007}]
    \label{finitezza}
Let $\lfloor s \rfloor$ denote the integer part of $s\in\erre$, and assume that the densities $\mu,\nu \in \mathcal{P}_s(\erre^d)$ possess equal moments up to $\lfloor s\rfloor$ if $s\notin \mathbb{N}$,    or equal moments up to $s-1$ if $s\in\mathbb{N}$. Then the Fourier-based distance $d_s(\mu,\nu)$ is well-defined. In particular, $d_2(\mu,\nu)$ is well-defined for two densities with the same center.
\end{proposition}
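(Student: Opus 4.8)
The plan is to prove that the supremum defining $d_s(\mu,\nu)$ is finite, by controlling the size of $\widehat{\mu}(\k)-\widehat{\nu}(\k)$ relative to $|\k|^s$ \emph{uniformly} in $\k$. Set $m=\lfloor s\rfloor$ when $s\notin\mathbb{N}$ and $m=s-1$ when $s\in\mathbb{N}$; in either case the hypothesis guarantees that $\mu$ and $\nu$ share all moments $\int_{\R^d}\x^\beta\,d\mu=\int_{\R^d}\x^\beta\,d\nu$ with $|\beta|\le m$, and that the (possibly fractional) moment $M_s^\mu:=\int_{\R^d}|\x|^s\,d\mu$ is finite, and likewise for $\nu$, since $\mu,\nu\in\mathcal{P}_s(\R^d)$.

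First I would subtract from $e^{-i\k\cdot\x}$ its Taylor polynomial of degree $m$ in the variable $\k$ about the origin, namely $T_m(\k,\x)=\sum_{n=0}^{m}\frac{(-i\,\k\cdot\x)^n}{n!}$. Expanding each power $(\k\cdot\x)^n$ by the multinomial formula shows that $\int_{\R^d}T_m(\k,\x)\,d\mu(\x)$ depends only on the monomial moments of order $\le m$, which coincide for $\mu$ and $\nu$; hence these polynomial contributions cancel in the difference and I can write
\[
\widehat{\mu}(\k)-\widehat{\nu}(\k)=\int_{\R^d}\big(e^{-i\k\cdot\x}-T_m(\k,\x)\big)\,d(\mu-\nu)(\x).
\]

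The crucial estimate is a bound on the Taylor remainder of the exponential. Writing $t=-\k\cdot\x$, the elementary inequality $\big|e^{it}-\sum_{n=0}^{m}\frac{(it)^n}{n!}\big|\le\min\big(\frac{|t|^{m+1}}{(m+1)!},\frac{2|t|^{m}}{m!}\big)$ yields, after interpolating the two bounds, $\big|e^{it}-\sum_{n=0}^{m}\frac{(it)^n}{n!}\big|\le C_s\,|t|^s$ for a constant $C_s$ depending only on $s$; in the integer case one may simply keep the first bound, giving remainder $\le|t|^s/s!$. Substituting $t=-\k\cdot\x$ and using $|\k\cdot\x|\le|\k|\,|\x|$, I would integrate against $\mu+\nu$ to obtain
\[
|\widehat{\mu}(\k)-\widehat{\nu}(\k)|\le C_s\,|\k|^s\,(M_s^\mu+M_s^\nu)\qquad\text{for every }\k\in\R^d\setminus\{0\}.
\]
Dividing by $|\k|^s$ and taking the supremum gives $d_s(\mu,\nu)\le C_s\,(M_s^\mu+M_s^\nu)<+\infty$, which is the assertion; the case $s=2$ corresponds to $m=1$, where equality of the zeroth and first moments is precisely equality of total mass and of the centers $\mmu=\nnu$.

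The step I expect to be the main obstacle is obtaining a remainder bound that is \emph{uniform} over all $\k$, rather than merely an asymptotic $o(|\k|^m)$ estimate valid near the origin. A naive Taylor expansion controls $\widehat{\mu}-\widehat{\nu}$ only as $\k\to0$ and says nothing for large $|\k|$, where the quotient could a priori blow up. The min/interpolation inequality above is exactly what reconciles the two regimes: for $|t|\le1$ the $(m+1)$-st order term gives $\le|t|^s/(m+1)!$, while for $|t|\ge1$ the cruder $m$-th order term gives $\le 2|t|^s/m!$, so both meet at the single power $|t|^s$ and produce the global estimate. Verifying this interpolation carefully, and checking that the moment $M_s$ indeed dominates all the integrals that appear (in particular that $\mathcal{P}_s(\R^d)\subseteq\mathcal{P}_m(\R^d)$, so the lower-order moments are finite and cancel), are the only points requiring real attention; the multinomial cancellation and the passage to the supremum are then routine.
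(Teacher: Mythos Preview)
The paper does not supply its own proof of this proposition: it is quoted verbatim as Proposition~2.6 of \cite{Toscani2007} and used as a black box. So there is nothing in the present paper to compare your argument against.

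That said, your argument is correct and is essentially the standard one given in \cite{Toscani2007}. The Taylor-remainder identity
\[
\widehat{\mu}(\k)-\widehat{\nu}(\k)=\int_{\R^d}\Big(e^{-i\k\cdot\x}-\sum_{n=0}^{m}\tfrac{(-i\k\cdot\x)^n}{n!}\Big)\,d(\mu-\nu)(\x),
\]
valid because the moments of order $\le m$ cancel, together with the uniform bound
\[
\Big|e^{it}-\sum_{n=0}^{m}\tfrac{(it)^n}{n!}\Big|\le \min\!\Big(\tfrac{|t|^{m+1}}{(m+1)!},\,\tfrac{2|t|^{m}}{m!}\Big)\le C_s\,|t|^s,
\]
is exactly the mechanism used there. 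Your identification of the ``main obstacle'' is also on point: the key is that the interpolated remainder estimate is global in $t$, so the bound $|\widehat{\mu}(\k)-\widehat{\nu}(\k)|\le C_s|\k|^s(M_s^\mu+M_s^\nu)$ holds for all $\k$, not just small ones. One minor remark: the paper defines $\mathcal P_m(\R^d)$ only for integer $m$, so strictly speaking $\mathcal P_s(\R^d)$ for non-integer $s$ should be read as $\{\mu:\int|\x|^s\,d\mu<\infty\}$, which is precisely how you use it.
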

The interest in the $d_2$ metric is related to its equivalence  to the Euclidean Wasserstein distance $W_2$. A detailed proof in dimension $d\ge1$ can be found in the review paper \cite{Toscani2007}.

\begin{theorem}[Proposition 2.12 and Corollary 2.17, \cite{Toscani2007}]
\label{equi1}
For any given pair of probability densities $\mu,\nu \in \mathcal{P}_2(\erre^d)$  such that $\mmu=\nnu$, 
    the $d_2$ metric is equivalent to the Euclidean Wasserstein distance $W_2$, that is, there exist two positive bounded constants $c <C$ such that
    \begin{equation}
    \label{eqWd2}
    cW_2(\mu,\nu)\leq d_2(\mu,\nu) \leq CW_2(\mu,\nu).
    \end{equation}
    \end{theorem}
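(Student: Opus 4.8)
The plan is to prove the two inequalities in \fer{eqWd2} separately, obtaining the upper bound $d_2(\mu,\nu)\le C\,W_2(\mu,\nu)$ by a coupling argument that splits frequencies, and the lower bound $c\,W_2(\mu,\nu)\le d_2(\mu,\nu)$ by a regularization/interpolation argument; the latter is the main obstacle. For the upper bound I would first fix an optimal transport plan $\pi\in\planmunu$ for $W_2$ and write
\[
\widehat{\mu}(\k)-\widehat{\nu}(\k)=\interrenn{\big(e^{-i\k\cdot\x}-e^{-i\k\cdot\y}\big)\,d\pi(\x,\y)}.
\]
The crucial point is to exploit the hypothesis $\mmu=\nnu$, which kills the first-order term: since $\int(\x-\y)\,d\pi=\mmu-\nnu=0$, I may subtract $i\k\cdot(\x-\y)$ inside the integral at no cost. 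A second-order Taylor estimate of the resulting integrand then produces a pointwise bound of order $|\k|^2$ times a quadratic expression in $\normadue{\x-\y}$ and $\normadue{\y}$; integrating against $\pi$ and applying Cauchy--Schwarz controls it by $W_2(\mu,\nu)$ together with the (finite) second moments. This settles the low-frequency range, where dividing by $|\k|^2$ leaves a bounded ratio.

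For the high-frequency range I would instead use the elementary fact that $\x\mapsto e^{-i\k\cdot\x}$ is $|\k|$-Lipschitz, so that Kantorovich--Rubinstein duality gives $|\widehat{\mu}(\k)-\widehat{\nu}(\k)|\le \sqrt2\,|\k|\,W_1(\mu,\nu)\le \sqrt2\,|\k|\,W_2(\mu,\nu)$; dividing by $|\k|^2$ yields a ratio of order $W_2(\mu,\nu)/|\k|$, which is small once $|\k|$ is bounded away from $0$. Splitting the supremum in \fer{fm} at $|\k|=1$ and combining the two regimes gives $d_2(\mu,\nu)\le C\,W_2(\mu,\nu)$, with $C$ depending only on the dimension and on the second moments of the two measures.

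The reverse inequality $W_2(\mu,\nu)\le C' d_2(\mu,\nu)$ is the delicate direction. Here I would regularize, convolving both measures with a narrow Gaussian $G_\varepsilon$ so that $\mu_\varepsilon,\nu_\varepsilon$ acquire smooth densities while $\widehat{\mu_\varepsilon}-\widehat{\nu_\varepsilon}=(\widehat{\mu}-\widehat{\nu})\,\widehat{G_\varepsilon}$, which does not increase the $d_2$-distance. For the smoothed pair I would bound $W_2(\mu_\varepsilon,\nu_\varepsilon)$ by a weighted negative-Sobolev norm of $\mu_\varepsilon-\nu_\varepsilon$ through the Benamou--Brenier/displacement-interpolation representation, estimating the transport cost by $\int|\nabla\phi|^2$ for the potential $\phi$ solving the linearized continuity equation, and then convert that norm into a Fourier expression carrying the multiplier $|\k|^{-2}$, which is precisely the weight defining $d_2$ in \fer{fm}.

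Finally I would let $\varepsilon\to0$, using a translation identity of the type in Lemma \ref{lemmaWtrasl} together with the equality of centers to show that the mollification error in $W_2$ is $O(\varepsilon)$ and disappears in the limit. The main obstacle lies entirely in this second direction: making the regularization uniform, justifying the interchange of the limit $\varepsilon\to0$ with the supremum over $\k$, and, most critically, upgrading the Hölder-type estimate that the weight $|\k|^{-2}$ naturally generates into the genuinely linear bound $W_2(\mu,\nu)\le C' d_2(\mu,\nu)$. Closing this last step without loss of exponent is expected to rely essentially on the equal-center hypothesis and on the finiteness of the second moments.
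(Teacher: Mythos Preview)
The paper does not supply its own proof of this theorem: the statement is quoted from \cite{Toscani2007} (Proposition~2.12 and Corollary~2.17 there), and the sentence immediately following it remarks that the constants produced by that argument are not explicit and depend on higher moments of $\mu,\nu$. So there is no in-paper proof to compare against, only the reference.

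Your upper bound argument is correct and is essentially the standard one; the paper in fact reproduces exactly this coupling plus second-order Taylor computation in its discrete setting (Theorem~\ref{mezzaeq22}). Note that once you use $\mmu=\nnu$ to cancel the linear term, the second-order remainder already gives $|\widehat\mu(\k)-\widehat\nu(\k)|\le C|\k|^2$ uniformly in $\k$, so the frequency split is unnecessary.

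For the lower bound, your Benamou--Brenier/$\dot H^{-1}$ route is a different strategy from the one in \cite{Toscani2007}, and the obstacle you flag is real rather than merely technical. Controlling the $\dot H^{-1}$-type integral by the \emph{sup}-norm $d_2$ forces a factor $\int_{\R^d}|\k|^2\,\widehat{G_\varepsilon}(\k)^2\,d\k\sim\varepsilon^{-(d+2)/2}$, while the mollification error in $W_2$ is only $O(\sqrt\varepsilon)$; optimizing over $\varepsilon$ then yields at best a H\"older estimate $W_2\lesssim d_2^{\,\theta}$ with $\theta<1$, never a linear one. The argument in \cite{Toscani2007} circumvents this by passing through a chain of intermediate probability metrics and interpolation inequalities that consume additional moments of $\mu,\nu$ (which is precisely why, as the paper stresses, the resulting constants depend on higher moments and are not explicit). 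Your sketch contains no device to close that gap, so as it stands the reverse inequality remains a plan with an identified, unfilled hole rather than a proof.
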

The proof in \cite{Toscani2007} does not provide in general the explicit expression of the two constants $c$ and $C$. The value of these constants is quite involved, and it is strongly dependent on higher moments of the densities. 

The equivalence result of Theorem \ref{equi1} can easily be extended to cover  the case of probability measures with different centers of mass. To this aim it is necessary, in analogy with the property  of Wasserstein distance $W_2$  stated in Lemma \ref{lemmaWtrasl},  to modify the Fourier-based metrics $d_2$ and $D_{2,p}$ in such a way to allow for probability measures with different centers of mass. We start by considering the case of the metric $d_2$.

\begin{definition}[Translated Fourier-based Metric]
    We define the function $\mathcal{D}_2: \mathcal{P}_2(\R^d)\times \mathcal{P}_2(\R^d) \to \R$ as:
    \begin{equation}
    \label{extFou}
    		\mathcal{D}_2(\mu,\nu) := \sqrt{d_2(\mu,\nu_{{\bm m}_\mu -{\bm m}_\nu})^2 + |\mmu -\nnu|^2 }.
    \end{equation}

\end{definition}

Owing to Remark \ref{momentiuguali} and Proposition \ref{finitezza}, $\mathcal{D}_2(\mu,\nu)$ is well-defined for each pair of probability measures in $\mathcal{P}_2(\R^d)$, independently of their centers. 
Note that $\nu_{\mmu -\nnu}$, which is the translation of $\nu$ by $\mmu -\nnu$, has the same center as $\mu$. 
One could give an equivalent definition of $\mathcal{D}_2$ by translating $\mu$, instead of $\nu$, or by translating both centers to $\bm 0$.
\begin{lemma}
    \label{lemmatranslate}
    Given $\mu,\nu \in \mathcal{P}_2(\R^d)$ and $\v,\w \in \mathbb{R}^d$, then
    \[
    		|\widehat{\vtrasl{\mu}}(\k)-\widehat{\wtrasl{\nu}}(\k)|=|\hat{\mu}(\k)-\widehat{\nu_{\bm w-\bm v}}(\k)| 
    = |\widehat{\mu_{\bm v-\bm w}}(\k)-\hat{\nu}(\k)|.
    \]
    Therefore
    \begin{equation*}
    d_2(\vtrasl{\mu},\wtrasl{\nu})=d_2({\mu},\nu_{\bm w- \bm v})=d_2(\mu_{\bm v- \bm w},\nu).
    \end{equation*}
    In particular, the  function $
    (\mu,\nu) \to d_2({\mu},\nu_{\mmu -\nnu})$ is symmetric.
\end{lemma}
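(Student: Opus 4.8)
The plan is to reduce everything to the elementary behaviour of the Fourier transform under translation, namely that translating a measure multiplies its Fourier transform by a unimodular factor. Concretely, for $\mu \in \mathcal{P}_1(\R^d)$ and $\bm\tau \in \R^d$, the push-forward definition gives
\[
\widehat{\mu_{\bm\tau}}(\k) = \int_{\R^d} e^{-i\k\cdot\x}\, d\mu_{\bm\tau}(\x) = \int_{\R^d} e^{-i\k\cdot(\x+\bm\tau)}\, d\mu(\x) = e^{-i\k\cdot\bm\tau}\,\hat{\mu}(\k).
\]
I would establish this identity first, since it is the engine for the whole lemma.

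Next I would compute the first displayed equality. Applying the translation identity to both $\mu$ and $\nu$ yields
\[
\widehat{\vtrasl{\mu}}(\k) - \widehat{\wtrasl{\nu}}(\k) = e^{-i\k\cdot\bm v}\hat{\mu}(\k) - e^{-i\k\cdot\bm w}\hat{\nu}(\k).
\]
Factoring out the unimodular quantity $e^{-i\k\cdot\bm v}$ and taking moduli (so that $|e^{-i\k\cdot\bm v}| = 1$ disappears), I get
\[
|\widehat{\vtrasl{\mu}}(\k) - \widehat{\wtrasl{\nu}}(\k)| = |\hat{\mu}(\k) - e^{-i\k\cdot(\bm w - \bm v)}\hat{\nu}(\k)| = |\hat{\mu}(\k) - \widehat{\nu_{\bm w - \bm v}}(\k)|,
\]
which is exactly the first claimed equality; the second follows symmetrically by instead factoring out $e^{-i\k\cdot\bm w}$. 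This is the routine computational core, and I would present it compactly rather than belabour it.

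The statement about $d_2$ is then immediate: since $d_2$ is the supremum over $\k\neq 0$ of $|\hat\mu(\k)-\hat\nu(\k)|/|\k|^2$, and the pointwise numerators agree by the identities just proved (the denominator $|\k|^2$ being untouched by translation), the three $d_2$ values coincide. Finally, for the symmetry of $(\mu,\nu)\mapsto d_2(\mu,\nu_{\mmu-\nnu})$, I would set $\bm v = -\mmu$ and $\bm w = -\nnu$ in the already-proved identity, giving $d_2(\mu_{-\mmu},\nu_{-\nnu}) = d_2(\mu,\nu_{\mmu-\nnu}) = d_2(\mu_{\nnu-\mmu},\nu)$; the leftmost expression $d_2(\mu_{-\mmu},\nu_{-\nnu})$ compares both measures recentred at the origin and is manifestly symmetric under swapping $\mu$ and $\nu$.

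I do not anticipate a genuine obstacle here, as the lemma is a direct consequence of the translation rule for the Fourier transform. The only point requiring mild care is keeping the signs and the direction of translation consistent throughout — distinguishing $\bm w - \bm v$ from $\bm v - \bm w$ and matching them correctly to which measure is being translated — so that the final symmetry claim aligns with the definition of $\mathcal{D}_2$ in \fer{extFou}.
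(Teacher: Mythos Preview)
Your proposal is correct and follows essentially the same approach as the paper: both establish the translation rule $\widehat{\mu_{\bm\tau}}(\k)=e^{-i\k\cdot\bm\tau}\hat\mu(\k)$ and then factor out a unimodular exponential to obtain the pointwise identity, from which the $d_2$ equalities follow. Your write-up is in fact slightly more complete, since you spell out the symmetry claim via $d_2(\mu_{-\mmu},\nu_{-\nnu})$, which the paper leaves implicit.
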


\begin{proof}
    By the translation property of the Fourier Transform, for all $\bm v \in \R^d$ we have the identity
\[
 	\widehat{\mu_{\bm v}}(\k) = e^{-i{\bm v}\cdot\k}\hat{\mu}(\k).
 \]
 Therefore
    \begin{eqnarray*}
    |e^{-i\v\cdot\k}\hat{\mu}(\k)-e^{-i\w\cdot\k}\hat{\nu}(\k)|
    &=& |e^{-i\w\cdot\k}(e^{-i(\v-\w)\cdot\k}\hat{\mu}(\k)-\hat{\nu}(\k))| \\
    &=& |e^{-i(\bm v-\bm w)\cdot \k}\hat{\mu}(\k)-\hat{\nu}(\k)|.
    \end{eqnarray*}
 This shows that
    \begin{equation*}
    \supk\frac{|e^{-i\v\cdot\k}\hat{\mu}(\k)-e^{-i\w\cdot\k}\hat{\nu}(\k)|}{|\k|^2}=\supk\frac{|e^{-i(\bm v - \bm w) \cdot\k}\hat{\mu}(\k)-\hat{\nu}(\k)|}{|\k|^2}. 
    \end{equation*}
    
\end{proof}

Lemma \ref{lemmatranslate} implies the following theorem.
\begin{theorem}
\label{teoremadistanza}
    The function $\mathcal{D}_2$ defined in \eqref{extFou} is a distance over $\mathcal{P}_2(\erre^d)$.
\end{theorem}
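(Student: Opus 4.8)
The plan is to verify the four defining properties of a metric for $\mathcal{D}_2$: non-negativity, separation, symmetry, and the triangle inequality. Non-negativity is immediate, since \eqref{extFou} exhibits $\mathcal{D}_2(\mu,\nu)$ as the square root of a sum of two squares. Symmetry splits into two pieces: the term $|\mmu - \nnu|^2$ is manifestly symmetric, while the symmetry of $d_2(\mu, \nu_{\mmu - \nnu})$ is precisely the last assertion of Lemma \ref{lemmatranslate}. Hence $\mathcal{D}_2(\mu,\nu) = \mathcal{D}_2(\nu,\mu)$.

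For the separation property, suppose $\mathcal{D}_2(\mu,\nu) = 0$. Since both summands in \eqref{extFou} are non-negative, each must vanish; from $|\mmu - \nnu|^2 = 0$ we get $\mmu = \nnu$, so that $\nu_{\mmu - \nnu} = \nu$, and the remaining condition becomes $d_2(\mu, \nu) = 0$. Because $d_2$ is a metric on measures sharing the same center (it is well-defined there by Proposition \ref{finitezza}, and it vanishes only when the two Fourier transforms coincide for all $\k\neq 0$, hence when $\mu = \nu$ by injectivity of the Fourier transform), we conclude $\mu = \nu$. The converse is clear: if $\mu = \nu$ then $\mmu = \nnu$ and $d_2(\mu,\nu) = 0$, whence $\mathcal{D}_2(\mu,\nu) = 0$.

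The triangle inequality is the substantive point, and the key is to recognize $\mathcal{D}_2$ as an $\ell^2$-type product metric. Applying Lemma \ref{lemmatranslate} with $\bm v = -\mmu$ and $\bm w = -\nnu$ gives $d_2(\mu_{-\mmu}, \nu_{-\nnu}) = d_2(\mu, \nu_{\mmu - \nnu})$, so that
\begin{equation*}
\mathcal{D}_2(\mu,\nu)^2 = d_2(\mu_{-\mmu}, \nu_{-\nnu})^2 + |\mmu - \nnu|^2 .
\end{equation*}
Under the identification $\mu \mapsto (\mu_{-\mmu}, \mmu)$, the function $\mathcal{D}_2$ becomes the Euclidean combination of the metric $d_2$ on centered measures and the Euclidean distance on the centers in $\R^d$. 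Given a third measure $\lambda$, I would set $a_1 = d_2(\mu_{-\mmu}, \nu_{-\nnu})$, $a_2 = |\mmu - \nnu|$, $b_1 = d_2(\nu_{-\nnu}, \lambda_{-\bm{m}_\lambda})$, $b_2 = |\nnu - \bm{m}_\lambda|$, all non-negative. The triangle inequality for $d_2$ on centered measures yields $d_2(\mu_{-\mmu}, \lambda_{-\bm{m}_\lambda}) \le a_1 + b_1$, and the triangle inequality for the Euclidean norm gives $|\mmu - \bm{m}_\lambda| \le a_2 + b_2$. Since the Euclidean norm in $\R^2$ is monotone on the non-negative orthant, and by Minkowski's inequality,
\begin{align*}
\mathcal{D}_2(\mu,\lambda) &= \sqrt{d_2(\mu_{-\mmu}, \lambda_{-\bm{m}_\lambda})^2 + |\mmu - \bm{m}_\lambda|^2} \\
&\le \sqrt{(a_1 + b_1)^2 + (a_2 + b_2)^2} \\
&\le \sqrt{a_1^2 + a_2^2} + \sqrt{b_1^2 + b_2^2} = \mathcal{D}_2(\mu,\nu) + \mathcal{D}_2(\nu,\lambda).
\end{align*}
The main obstacle is exactly the reduction in the first step: once the translation identity of Lemma \ref{lemmatranslate} converts \eqref{extFou} into a genuine $\ell^2$-product of two known metrics, the triangle inequality follows from componentwise monotonicity of the Euclidean norm together with Minkowski's inequality, with no further estimates on the measures required.
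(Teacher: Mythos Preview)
Your proof is correct and follows essentially the same approach as the paper: non-negativity and separation are immediate, symmetry comes from Lemma~\ref{lemmatranslate}, and the triangle inequality is obtained by recognizing $\mathcal{D}_2$ as the $\ell^2$-combination of the metric $d_2$ on centered measures and the Euclidean distance on centers. Your treatment of the triangle inequality is in fact more explicit than the paper's, which merely notes that both components satisfy a triangle inequality; your use of Lemma~\ref{lemmatranslate} to rewrite $d_2(\mu,\nu_{\mmu-\nnu})=d_2(\mu_{-\mmu},\nu_{-\nnu})$ makes the product-metric structure transparent and justifies the step cleanly.
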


\begin{proof}
Clearly $\mathcal{D}_2(\mu,\nu)\geq 0, \forall\mu,\nu\in\mathcal{P}_2(\erre^d)$, and   $\mathcal{D}_2(\mu,\nu)= 0$ if and only if $\mu = \nu$.
  Symmetry  follows from 
  Lemma \ref{lemmatranslate}. 
  Finally, both $d_2(\mu,\nu) $, in reason of the fact that it is a distance, and $|{\bm m}_\mu -{\bm m}_\nu|$  satisfy the triangular inequality.
  
    {}
\end{proof}

An analogous extension can be done for the metric $D_{2,p}$ defined in \fer{baring}.
\begin{definition}
    Given  $p\ge 1$,  we define $\mathcal{D}_{2,p}:  \mathcal{P}_2(\R^d)\times \mathcal{P}_2(\R^d) \to \R$ by 
   \[
        \mathcal{D}_{2,p}(\mu,\nu) := \sqrt{D_{2,p}(\mu,\nu_{\mmu-\nnu})^2 + |\mmu-\nnu|^2 }.
    \]
  $\mathcal{D}_{2,p}$ is a metric on $\mathcal{P}_2(\R^d)$.
\end{definition}

It is remarkable that the result of Theorem \ref{equi1} can be extended to  the $\mathcal{D}_2$ metric. 
\begin{theorem}
\label{teoremaequivalenza}
    The function $\mathcal{D}_2$ defined in \eqref{extFou} is equivalent to the $W_2$ distance.
\end{theorem}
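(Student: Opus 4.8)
The plan is to reduce the claim to the equal-center equivalence of Theorem \ref{equi1}, using the translation identities already at our disposal. The key observation is that the translated measure $\nu_{\mmu-\nnu}$ has center $\nnu + (\mmu-\nnu) = \mmu$, so that $\mu$ and $\nu_{\mmu-\nnu}$ share the same center. Theorem \ref{equi1} therefore applies to this pair, giving constants $0 < c < C$ with
\[
c\,W_2(\mu,\nu_{\mmu-\nnu}) \leq d_2(\mu,\nu_{\mmu-\nnu}) \leq C\,W_2(\mu,\nu_{\mmu-\nnu}).
\]

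Next I would express $W_2(\mu,\nu_{\mmu-\nnu})$ through $W_2(\mu,\nu)$. Applying Lemma \ref{lemmaWtrasl} with $\bm v = \bm 0$ and $\bm w = \mmu-\nnu$ yields
\[
W_2(\mu,\nu_{\mmu-\nnu})^2 = W_2(\mu,\nu)^2 - |\mmu-\nnu|^2.
\]
Setting $A := W_2(\mu,\nu_{\mmu-\nnu})^2 \geq 0$ and $B := |\mmu-\nnu|^2 \geq 0$, this reads $W_2(\mu,\nu)^2 = A + B$, while definition \eqref{extFou} gives $\mathcal{D}_2(\mu,\nu)^2 = d_2(\mu,\nu_{\mmu-\nnu})^2 + B$.

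Squaring the bounds from Theorem \ref{equi1} gives $c^2 A \leq d_2(\mu,\nu_{\mmu-\nnu})^2 \leq C^2 A$, whence
\[
c^2 A + B \leq \mathcal{D}_2(\mu,\nu)^2 \leq C^2 A + B.
\]
Because $A,B \geq 0$ and $A+B = W_2(\mu,\nu)^2$, the elementary estimates $\min(c^2,1)(A+B) \leq c^2 A + B$ and $C^2 A + B \leq \max(C^2,1)(A+B)$ then produce
\[
\min(c,1)\,W_2(\mu,\nu) \leq \mathcal{D}_2(\mu,\nu) \leq \max(C,1)\,W_2(\mu,\nu),
\]
which is the desired equivalence.

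The whole argument is essentially bookkeeping once the two ingredients (Theorem \ref{equi1} applied after the center-matching translation, and the $W_2$ translation identity of Lemma \ref{lemmaWtrasl}) are combined. The one point that needs care, and which I expect to be the only genuine subtlety, is that the displacement term $|\mmu-\nnu|^2$ enters $\mathcal{D}_2^2$ and $W_2^2$ with the same coefficient $1$: the constants of Theorem \ref{equi1} control only the $d_2$-part $A$, so they cannot be transported verbatim, and one must take the minimum and maximum against $1$ to absorb the mismatch between the $d_2$-controlled part and the center-displacement part. Well-definedness is automatic, since $d_2(\mu,\nu_{\mmu-\nnu})$ is finite by Proposition \ref{finitezza} (equal centers) and $A \geq 0$ because it is a squared distance, which incidentally re-proves $|\mmu-\nnu| \leq W_2(\mu,\nu)$.
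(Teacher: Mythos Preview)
Your proof is correct and follows essentially the same route as the paper's: reduce to the equal-center case via a translation, invoke Theorem~\ref{equi1} on the centered pair, use the $W_2$ translation identity of Lemma~\ref{lemmaWtrasl} to reintroduce the displacement term, and then absorb the mismatch between the $d_2$-controlled part and the $|\mmu-\nnu|^2$ term by taking $\min$/$\max$ against $1$. The only cosmetic difference is that the paper translates both measures to the origin (working with $\mu^*,\nu^*$) whereas you translate only $\nu$ to match $\mmu$; by Lemma~\ref{lemmatranslate} these give the same $d_2$ value, and the paper itself remarks just before Lemma~\ref{lemmatranslate} that the two formulations are interchangeable.
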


\begin{proof}
    Let $\mu,\nu\in\mathcal{P}_2(\erre^d)$ and let $\mu^*,\nu^*$ denote the two corresponding translated measures centered in $\vett 0$. By Lemma \ref{lemmaWtrasl}, we have
    \begin{equation}
        \label{tform}
        W_2^2(\mu,\nu)=W_2^2(\mu^*,\nu^*)+|\mmu-\nnu|^2.
    \end{equation}
    Owing to Theorem \ref{equi1}, there exist two constants $c,C \in (0,\infty)$ such that
    \begin{equation}
        \label{toscaniform}
        cd_2(\mu^*,\nu^*)\leq W_2(\mu^*,\nu^*)\leq Cd_2(\mu^*,\nu^*).
    \end{equation}
    Using (\ref{tform}) in (\ref{toscaniform}), we get
    \begin{equation*}
        cd_2(\mu^*,\nu^*)^2+|\mmu-\nnu|^2\leq W_2(\mu,\nu)^2\leq Cd_2(\mu^*,\nu^*)^2+|\mmu-\nnu|^2,
    \end{equation*}
    which can be rewritten as
    \begin{multline}
        \min\{c,1\}\big(d_2(\mu^*,\nu^*)^2+|\mmu-\nnu|^2\big)\leq W_2(\mu,\nu)^2\\\leq \max\{1,C\}\big(d_2(\mu^*,\nu^*)^2+|\mmu-\nnu|^2\big)\nonumber.
    \end{multline}
    Finally
    \[
       \min\{c,1\} \, \mathcal D_2^2(\mu,\nu) \leq W_2^2(\mu,\nu) \leq \max\{1,C\} \, \mathcal D_2^2(\mu,\nu).
    \]
    {}
\end{proof}

\section{The Periodic Fourier-based metrics}
\label{discreto}
In this section,  we introduce a family of (Discrete) Periodic Fourier-based metrics suitable to measure the distance between discrete probability measures whose support is restricted to a given set of  points, and we discuss their equivalence with the Wasserstein metrics. The main result is that in this case one obtains a precise estimation of the constants of equivalence.
\begin{definition}[Regular grid]
For $N\in \mathbb N \setminus \{0\}$,  we define the regular grid 
\[
	G_N:= \left\{ \bm x \in \R^d : N\bm x \in \mathbb Z^d \cap [0,N)^d \right\}.
\]	
Note that $G_N \subset [0,1)^d$.
\end{definition}

\begin{definition}[Discrete Measure over a grid]
    We say that $\mu$ is a a discrete measure over $G_N$ if its support is contained in $G_N$, that is, if $\mu$ has the form
    \begin{equation} 
    \label{discretemeasure}
    		\mu(\bm x)=\sum_{\bm y \in G_N}\mu_{\bm y}\delta({\bm x - \bm y}), 
    \end{equation}
\end{definition}
where $\mu_{\bm y} \in \R, \mu_{\bm y} \geq 0$ for all $\bm y \in G_N$.

The Discrete Fourier transform of a discrete measure over $G_N$ is given by
\begin{equation}
    \label{periodicity}
    \mi = \sum_{\bm x \in G_N}\mu_{\bm x}e^{-i\langle \bm x, \bm k\rangle}.
\end{equation}
The 
periodicity of the complex exponential implies that $\hat{\mu}$ is $2\pi N$-periodic over all directions, so that it is sufficient to study  $\hat{\mu}$ over a strict subset of $\mathbb{R}^d$, e.g.,  over $[0,2 \pi N]^d$. 
For instance, the value of the Fourier-based metric \fer{fm} is achieved by searching for the ``$\sup$'' operator on the bounded set $[0,2\pi N]^d$.  Since
\[
    \frac{1}{|\k|^2}\geq \frac{1}{|\k'|^2}, \quad \forall \k \in (0,2\pi N]^d, \, 
            \forall \k' \in \erre^d_+\backslash [0,2\pi N]^d
\]
and the function 
\[
    \k \rightarrow|\hat{\mu}(\k)-\hat{\nu}(\k)|
\]
is $2\pi N$-periodic, for any given constant $s >0$ the Discrete Fourier-based metric can be defined as 
\begin{equation}\label{fm2}
    d_s(\mu,\nu)=\sup_{\k \in [0,2\pi N]^d\backslash\{0\}}{\dfrac{|\hat{\mu}(\k)-\hat{\nu}(\k)|}{|\k|^s}}.
\end{equation}

\begin{definition}[Dilated Discrete Measures]\label{dildismeas}
Given a discrete measure $\mu$ over $G_N$ and $\gamma\in \erre$ such that $\gamma>0$, the $\gamma$-dilated measure $\mu_\gamma$ is
    \[
    		\mu_\gamma(\x)=\sum_{\bm y \in G_N}\mu_{\bm y} \delta(\gamma\x - \bm y).
    \]
  The Fourier transform of $\mu_\gamma$ is
    \begin{equation}\label{eq:dilation}
    		\hat{\mu}_\gamma(\k)=\sum_{\bm x \in G_N} \mu_{\bm x}e^{-\frac{i}{\gamma}\langle \bm k, \bm x \rangle} 
			= \hat{\mu}\bigg(\frac{\k}{\gamma}\bigg).
    \end{equation}
    Therefore, if $\hat{\mu}$ is  $T$-periodic, then $\hat{\mu}_\gamma$ is $\gamma T$-periodic.
Like the original metrics \fer{fm} \cite{Toscani2007}, the metric \fer{fm2} satisfies the dilation property
 \begin{equation}\label{dila}
  d_s(\mu_\gamma,\nu_\gamma)= \frac 1{\gamma^s}  d_s(\mu,\nu).
 \end{equation}
In particular, if we consider $\mu$ of the form (\ref{discretemeasure}), the Fourier transform of its $\frac{1}{N}$-dilation is $2\pi$-periodic.

\end{definition}

We recall the definition of the metrics \fer{baring}:
\[
	D_{s,p}(\mu,\nu):=\left(\int_{\R^d}\dfrac{|\widehat{\mu}(\k)-\widehat{\nu}(\k)|^p}{|\k|^{(sp+d)}}d\k \right)^{\frac{1}{p}},
\]
where $s>0$ and $p\geq 1$. 
As we did for the Fourier Based Metrics $d_s$, thanks to the periodicity of the Fourier transform, we can restrict the domain of integration to $[0,T]^d$.
In this case, for any given choice of the parameters $p$ and $s$, this distance is well-defined any time the integrand is integrable in a neighbourhood of the origin. This corresponds to requiring that $\frac{1}{|\k|^\gamma}$ is integrable on the $d$-dimensional ball 
$B_{1}(0)=\{\vett k\in\R^d : |\vett k|\leq 1 \}$, that is,  if and only if $\gamma<d$.
This consideration suggests the following definition.

\begin{definition}[The Periodic Fourier-based Metric]
  Let $\mu$ and $\nu$ be two probability measures over $G_N$. The $(s,p,\alpha)$-Periodic Fourier-based Metric (or PFM) between $\mu$ and $\nu$ is defined as
    \begin{equation}
    \label{PFM}
    		\fspa (\mu,\nu):=\bigg( \dfrac{1}{|T|^d}
			\int_{[0,T]^d}  \dfrac{|\widehat\mu(\k)-\widehat\nu(\k)|^p}{|\k|^{sp+\alpha}}d\k\bigg)^{\frac{1}{p}},
    \end{equation}
    where $p,s,\alpha \in \R$ and $T$ is the period of $\hat{\mu}$ and $\hat{\nu}$. When $\alpha=0$ and $s\in \mathbb{N}$ we say that $f_{s,p}:=f_{s,p}^{(0)}$ is {\it pure}.
\end{definition}

As discussed in the introduction, in dimension $d=1$ the continuous version of the metrics \fer{PFM} has been considered in \cite{Baringhaus1997}. Recently, these metrics have been considered in relation with the problem of convergence toward equilibrium of a Fokker--Planck type equation modeling wealth distribution \cite{torregrossa2017wealth}, where various properties of these metrics have been studied. 
As pointed out in \cite{torregrossa2017wealth}, if $\mu$ and $\nu$ have equal $r$-moments, the function $|\hat{\mu}(\k)-\hat{\nu}(\k)|$ behaves like $|\k|^{r+1}$ as $\k \to 0$. As a consequence, the value of $\fspa(\mu,\nu)$ is finite only if the following condition is verified
\begin{equation}
\label{cond}
	p(s-r -1)+\alpha<d.
\end{equation}
If $s,p$ and $\alpha$ satisfy \eqref{cond}, and thus $\fspa<+\infty$, we say that $\fspa$ is feasible.

\begin{proposition}
 Let $\mu$ and $\nu$ be two probability measures over $G_N$. For any given constant  $\gamma >0$, the following dilation property holds
    \begin{equation*}
    \fspa (\mu_\gamma,\nu_\gamma)=\dfrac{1}{|\gamma|^{s+\frac{\alpha}{p}}}\fspa(\mu,\nu).
    \end{equation*}
\end{proposition}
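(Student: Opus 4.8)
The plan is a direct computation starting from the definition \eqref{PFM} applied to the dilated pair $(\mu_\gamma,\nu_\gamma)$. First I would record the two ingredients already established in Definition \ref{dildismeas}: the dilation formula \eqref{eq:dilation} gives $\widehat{\mu_\gamma}(\k)=\hat{\mu}(\k/\gamma)$ and $\widehat{\nu_\gamma}(\k)=\hat{\nu}(\k/\gamma)$, and if $\hat{\mu},\hat{\nu}$ are $T$-periodic then $\widehat{\mu_\gamma},\widehat{\nu_\gamma}$ are $\gamma T$-periodic. Consequently the period entering the definition of $\fspa(\mu_\gamma,\nu_\gamma)$ is $\gamma T$, so the normalising constant is $1/|\gamma T|^d$ and the domain of integration is $[0,\gamma T]^d$; this is exactly the point where one must be sure that the metric is being evaluated against the \emph{correct} period, so that the defining formula applies verbatim to the dilated measures.

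Second, I would perform the change of variables $\bm u=\k/\gamma$ in the resulting integral, so that $d\k=\gamma^{d}\,d\bm u$ and the cube $[0,\gamma T]^d$ is mapped onto $[0,T]^d$. Since $\gamma>0$ one has $|\k|=\gamma|\bm u|$, hence $|\k|^{sp+\alpha}=\gamma^{sp+\alpha}|\bm u|^{sp+\alpha}$, while the numerator $|\widehat{\mu_\gamma}(\k)-\widehat{\nu_\gamma}(\k)|^{p}$ becomes precisely $|\hat{\mu}(\bm u)-\hat{\nu}(\bm u)|^{p}$. Third, I would collect the powers of $\gamma$: the Jacobian factor $\gamma^{d}$ cancels against the $\gamma^{d}$ hidden in $|\gamma T|^d=\gamma^{d}|T|^d$, restoring the original normalisation $1/|T|^d$, and the only surviving factor is $\gamma^{-(sp+\alpha)}$ coming from the homogeneity of the kernel. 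The integral is thereby reduced to $\fspa(\mu,\nu)^{p}$, and pulling the constant outside the $p$-th root produces the prefactor $\gamma^{-(s+\alpha/p)}=|\gamma|^{-(s+\alpha/p)}$, the equality holding because $\gamma>0$.

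There is no genuine obstacle here: the proposition follows from a single change of variables together with the dilation identity \eqref{eq:dilation}. The only step demanding care is the bookkeeping of the exponents of $\gamma$, which enter from three distinct sources, namely the normalising constant $1/|\gamma T|^d$, the Jacobian $d\k=\gamma^{d}\,d\bm u$, and the homogeneous kernel $|\k|^{-(sp+\alpha)}$; verifying that the $\gamma^{d}$ contributions cancel and that only $\gamma^{-(sp+\alpha)}$ remains is what yields the clean exponent $s+\alpha/p$ after taking the $p$-th root.
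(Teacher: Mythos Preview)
Your proposal is correct and follows essentially the same route as the paper's proof: apply the definition with period $\gamma T$, use the dilation identity \eqref{eq:dilation}, perform the substitution $\k=\gamma\bm u$, and observe that the $\gamma^{d}$ from the Jacobian cancels the $\gamma^{d}$ in the normalisation so that only $\gamma^{-(sp+\alpha)}$ survives before the $p$-th root. The paper's argument is the same computation written out line by line.
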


\begin{proof}
    Using relation \eqref{eq:dilation} and the change of variables $\k = \gamma \bm k'$, we get
        \begin{eqnarray*}
    \fspa(\mu_\gamma,\nu_\gamma)
    &=& \Bigg(\intaTnormd \dfrac{|\hat{\mu}_\gamma(\k)-\hat{\nu}_\gamma(\k)|^p}{|\k|^{sp+\alpha}}d\k \Bigg)^{\frac{1}{p}} \\
    &=& \Bigg(\intaTnormd  \dfrac{|\hat{\mu}(\frac{\k}{\gamma})-\hat{\nu}(\frac{\k}{\gamma})|^p}{|\k|^{sp+\alpha}}d\k \Bigg)^{\frac{1}{p}}\\
    &=&\Bigg(\dfrac{1}{|\gamma|^d}\intaTnormdued \dfrac{|\hat{\mu}(\k')-\hat{\nu}(\k')|^p}{|\gamma|^{sp+\alpha}|\k'|^{sp+\alpha}}|\gamma|^d d\k' \Bigg)^{\frac{1}{p}}\\
    &=&\dfrac{1}{|\gamma|^{s+\frac{\alpha}{p}}}\Bigg(\intaTnormdued \dfrac{|\hat{\mu}(\k')-\hat{\nu}(\k')|^p}{|\k'|^{sp+\alpha}}d\k' \Bigg)^{\frac{1}{p}}\\
    &=& \dfrac{1}{|\gamma|^{s+\frac{\alpha}{p}}}\fspa(\mu,\nu).
    \end{eqnarray*}
    {}
\end{proof}
It is important to remark that, at difference with the metrics \fer{baring}, the analogous of the dilation property \fer{dila} is true only for $\alpha=0$, that is only for pure metrics. 
We show next that the $\fspa$ metrics satisfy various monotonicity properties with respect to the parameters $p$ and $s$.

\begin{proposition}
\label{ordinos}
  Let $\mu$ and $\nu$ be two probability measures over $G_N$, with moments equal up to $r$. If $t\leq s$, then
    \begin{equation*}
    f_{t,p}^{(\alpha)}(\mu,\nu)\leq (\sqrt{d}|T|)^{(s-t)} \fspa(\mu,\nu),
    \end{equation*}
    for any $p$ and $\alpha$ for which the metric is feasible, i.e., for $p(s-r-1)+\alpha < d$.
\end{proposition}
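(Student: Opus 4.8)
The plan is to compare the two integrands pointwise, exploiting the fact that $f_{t,p}^{(\alpha)}(\mu,\nu)$ and $\fspa(\mu,\nu)$ differ only through the exponent of $|\k|$ appearing in the denominator, namely $tp+\alpha$ versus $sp+\alpha$. First I would rewrite, for every $\k\neq 0$,
\[
\frac{1}{|\k|^{tp+\alpha}} = \frac{|\k|^{(s-t)p}}{|\k|^{sp+\alpha}},
\]
so that the only new factor is $|\k|^{(s-t)p}$, which is nonnegative because $s\ge t$ and $p>0$.

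The key geometric step is to bound $|\k|$ uniformly on the domain of integration $[0,T]^d$. The modulus $|\k|$ attains its maximum over the cube $[0,T]^d$ at the corner $\k=(T,\dots,T)$, where $|\k|=\sqrt{d}\,T=\sqrt{d}|T|$; hence $|\k|\le \sqrt{d}|T|$ throughout the cube. Since the exponent $(s-t)p$ is nonnegative, this yields the pointwise bound $|\k|^{(s-t)p}\le(\sqrt{d}|T|)^{(s-t)p}$ valid on all of $[0,T]^d$ (in particular also near the origin, where $|\k|^{(s-t)p}$ is small). Combining with the identity above gives
\[
\frac{|\widehat\mu(\k)-\widehat\nu(\k)|^p}{|\k|^{tp+\alpha}} \le (\sqrt{d}|T|)^{(s-t)p}\,\frac{|\widehat\mu(\k)-\widehat\nu(\k)|^p}{|\k|^{sp+\alpha}}
\]
for every $\k\in[0,T]^d\setminus\{0\}$.

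I would then integrate this inequality over $[0,T]^d$, divide by $|T|^d$, and take the $p$-th root, which produces exactly the constant $(\sqrt{d}|T|)^{s-t}$ in front of $\fspa(\mu,\nu)$. Finiteness is not an issue: feasibility of $\fspa$ means $p(s-r-1)+\alpha<d$, which for $t\le s$ and $p>0$ implies $p(t-r-1)+\alpha<d$, so $f_{t,p}^{(\alpha)}$ is feasible as well; moreover the pointwise domination already guarantees that the left-hand integral converges whenever the right-hand one does. The only delicate point, and hence the main thing to get right, is correctly identifying the maximal value $\sqrt{d}|T|$ of $|\k|$ on the cube and verifying that the sign of the exponent $(s-t)p$ orients the bound in the desired direction; beyond that the argument is a direct monotonicity estimate.
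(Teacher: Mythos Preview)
Your proof is correct and follows essentially the same route as the paper: multiply and divide by $|\k|^{(s-t)p}$, then use the uniform bound $|\k|\le\sqrt{d}|T|$ on the cube $[0,T]^d$ (with nonnegative exponent $(s-t)p$) to pull the constant out of the integral. Your additional remarks on feasibility of $f_{t,p}^{(\alpha)}$ are a nice touch that the paper leaves implicit.
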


\begin{proof}
We compute
    \begin{eqnarray*}
     f_{t,p}^{(\alpha)}(\mu,\nu) &=& \Bigg(\intTnormd \dfrac{|\hat{\mu}(\k)-\hat{\nu}(\k)|^p}{|\k|^{tp+\alpha}}d\k \Bigg)^{\frac{1}{p}}\\
    &=&\Bigg(\intTnormd \dfrac{|\k|^{p(s-t)}}{|\k|^{p(s-t)}}\dfrac{|\hat{\mu}(\k)-\hat{\nu}(\k)|^p}{|\k|^{tp+\alpha}}d\k \Bigg)^{\frac{1}{p}}\\
    &=&\Bigg(\intTnormd|\k|^{p(s-t)} \dfrac{|\hat{\mu}(\k)-\hat{\nu}(\k)|^p}{|\k|^{sp+\alpha}}d\k \Bigg)^{\frac{1}{p}}\\
    &\leq&(\sqrt{d}|T|)^{(s-t)}\fspa(\mu,\nu).
    \end{eqnarray*}
The last inequality is obtained resorting to the bound $|\k| \leq  \sqrt{d}|T|$.
{}
\end{proof}

\begin{proposition}
  Let $\mu$ and $\nu$ be two probability measures over $G_N$. If $\alpha=0$ and $p\leq q$, then
    \begin{equation*}
    f_{s,p}(\mu,\nu)\leq f_{s,q}(\mu,\nu).
    \end{equation*}
\end{proposition}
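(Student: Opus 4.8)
The plan is to recognize that, once $\alpha=0$, the pure metric $f_{s,p}$ is nothing but an $L^p$-norm taken with respect to a \emph{probability} measure, and then to invoke the standard monotonicity of $L^p$-norms in the exponent. Setting $\alpha=0$ in the definition \eqref{PFM}, I would introduce
\[
	g(\k) := \frac{|\hat\mu(\k)-\hat\nu(\k)|}{|\k|^{s}}, \qquad d\lambda(\k) := \frac{1}{|T|^{d}}\, d\k,
\]
so that $\lambda$ is a probability measure on $[0,T]^{d}$ and
\[
	f_{s,p}(\mu,\nu) = \left( \int_{[0,T]^{d}} g(\k)^{p}\, d\lambda(\k) \right)^{1/p} = \| g \|_{L^{p}(\lambda)}.
\]
The claim then reduces to the assertion that $\| g \|_{L^{p}(\lambda)} \le \| g \|_{L^{q}(\lambda)}$ whenever $p \le q$ and $\lambda$ is a probability measure.

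First I would apply H\"older's inequality with the conjugate exponents $\frac{q}{p} > 1$ and $\frac{q}{q-p}$. Writing $g^{p} = g^{p}\cdot 1$ and using that $\lambda$ has total mass one, this gives
\[
	\int g^{p}\, d\lambda \le \left( \int (g^{p})^{q/p}\, d\lambda \right)^{p/q} \left( \int 1\, d\lambda \right)^{(q-p)/q} = \left( \int g^{q}\, d\lambda \right)^{p/q}.
\]
Raising both sides to the power $1/p$ yields the desired inequality. Equivalently, the same conclusion follows from Jensen's inequality applied to the convex map $t \mapsto t^{q/p}$ and the function $g^{p}$ on the probability space $([0,T]^{d},\lambda)$, which gives $\left( \int g^{p}\, d\lambda \right)^{q/p} \le \int g^{q}\, d\lambda$ directly.

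I do not expect a genuine obstacle here; the single point worth emphasizing is that the monotonicity of $L^{p}$-norms in $p$ is a feature of \emph{normalized} measures, and it would fail without the factor $1/|T|^{d}$ built into the definition \eqref{PFM}. The hypothesis $\alpha=0$ is exactly what collapses the integrand into the clean $p$-th power $g^{p}$, with the exponent on $|\k|$ scaling proportionally to $p$; for $\alpha\ne 0$ the same manipulation no longer isolates a single function raised to the $p$-th power, and the monotonicity need not persist. Finally, finiteness of the right-hand side is assured whenever $f_{s,q}$ is feasible, and the inequality then automatically transfers finiteness to $f_{s,p}$.
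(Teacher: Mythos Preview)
Your proof is correct and matches the paper's own argument: both reduce the claim to the monotonicity of $L^p$-norms on the probability space $([0,T]^d,\, |T|^{-d}\,d\k)$ and conclude via Jensen's inequality (equivalently, H\"older with exponent $q/p$). Your added remarks on the role of the normalization $1/|T|^d$ and of the hypothesis $\alpha=0$ are accurate and a welcome clarification.
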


\begin{proof}
    We have 
    \begin{eqnarray*}
    f_{s,p}(\mu,\nu)&=&\Bigg(\intTnormd \dfrac{|\hat{\mu}(\k)-\hat{\nu}(\k)|^p}{|\k|^{sp}}d\k \Bigg)^{\frac{1}{p}}\\
    &=&\Bigg(\Bigg(\intTnormd \dfrac{|\hat{\mu}(\k)-\hat{\nu}(\k)|^p}{|\k|^{sp}}d\k \Bigg)^{\frac{q}{p}}\Bigg)^{\frac{1}{q}}\\
    &\leq& \Bigg(\intTnormd \Bigg(\dfrac{|\hat{\mu}(\k)-\hat{\nu}(\k)|^p}{|\k|^{sp}}\Bigg)^{\frac{q}{p}}d\k \Bigg)^{\frac{1}{q}}\\
    &=&f_{s,q}(\mu,\nu).
    \end{eqnarray*}
 The last inequality follows from Jensen's inequality.
    {}
\end{proof}
\begin{remark}
\label{lpstyle}
    By letting $p\to +\infty$, we get 
    \begin{equation*}
    \lim_{p\to \infty}f_{s,p}(\mu,\nu)=f_{s,\infty}(\mu,\nu):=d_s(\mu,\nu).
    \end{equation*}
    Thanks to H\"older inequality, for all $p<+\infty$ we have the bound
    \begin{equation}\label{bbc}
    f_{s,p}(\mu,\nu)\leq d_s(\mu,\nu).
    \end{equation}
\end{remark}
The results of this Section are preliminary to our main result, which deals with the equivalence of the pure metrics, for $p=2$,  with the Wasserstein metrics.
For the sake of simplicity, and without loss of generality, in the next subsection we consider  measures  in dimension $d=2$.

\subsection{\emph{Equivalence with the Wasserstein metric $W_1$}}

We consider the two cases $s=1$ and $s=2$, in dimension $d=2$, and we show that $f_{1,2}$ and $f_{2,2}$ are equivalent to $W_1$ and $W_2$, respectively.

We start with the case $s=1$. For any  $\mu,\nu \in \mathcal{P}(G_N)$,  the PFM is
\begin{equation}
\label{eqf11}
f_{1,2}(\mu,\nu)=\bigg(\intaTnormdue \dfrac{|\hat{\mu}(\k)-\hat{\nu}(\k)|^2}{|\k|^2}d\k\bigg)^{\frac{1}{2}}.
\end{equation}
We have the following 
\begin{theorem}\label{th3}
 For any pair of measures $\mu,\nu\in \mathcal{P}(G_N)$, we have the inequality
\[
     f_{1,2} (\mu,\nu) \le  W_1(\mu,\nu).
 \]
\end{theorem}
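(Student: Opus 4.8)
The plan is to factor the inequality through the sup-based metric $d_1$, reducing it to the chain of estimates
\[
f_{1,2}(\mu,\nu) \le d_1(\mu,\nu) \le W_1(\mu,\nu).
\]
The first of these is already available: it is precisely the H\"older-type bound \fer{bbc} of Remark \ref{lpstyle}, specialized to $s=1$, $p=2$. Indeed, the normalizing factor $1/|T|^d$ in the definition \fer{PFM} is exactly what makes $f_{1,2}\le d_1$ hold, since $|\hat\mu(\k)-\hat\nu(\k)|^2/|\k|^2 \le d_1(\mu,\nu)^2$ pointwise and the average of a quantity bounded by a constant is bounded by that constant. Thus the only genuine work is to prove the second inequality $d_1(\mu,\nu)\le W_1(\mu,\nu)$.

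To establish $d_1\le W_1$, I would fix an arbitrary transport plan $\pi\in\planmunu$ and use it to rewrite the Fourier difference. Since $\mu$ and $\nu$ are discrete measures over $G_N$, the plan $\pi$ is supported on $G_N\times G_N$; writing $\pi=(\pi_{\x\y})$ with $\pi_{\x\y}\ge 0$, $\sum_{\y}\pi_{\x\y}=\mu_\x$ and $\sum_{\x}\pi_{\x\y}=\nu_\y$, the defining relation \fer{periodicity} and the marginal constraints give
\[
\hat\mu(\k) - \hat\nu(\k) = \sum_{\x,\y \in G_N} \pi_{\x\y}\big(e^{-i\langle \x,\k\rangle} - e^{-i\langle \y,\k\rangle}\big).
\]
The key elementary estimate is then
\[
\big|e^{-i\langle \x,\k\rangle} - e^{-i\langle \y,\k\rangle}\big| = \big|e^{-i\langle \x-\y,\k\rangle} - 1\big| = 2\Big|\sin\tfrac{\langle \x-\y,\k\rangle}{2}\Big| \le |\langle \x-\y,\k\rangle| \le |\x-\y|\,|\k|,
\]
where the last step is Cauchy--Schwarz. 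Feeding this into the triangle inequality for the finite sum and dividing by $|\k|$ yields
\[
\frac{|\hat\mu(\k)-\hat\nu(\k)|}{|\k|} \le \sum_{\x,\y\in G_N} \pi_{\x\y}\,|\x-\y|,
\]
a bound uniform in $\k$. Taking the supremum over $\k\in\R^d\setminus\{0\}$ on the left (which does not touch the $\k$-free right-hand side) and then the infimum over $\pi$ on the right gives $d_1(\mu,\nu)\le W_1(\mu,\nu)$, completing the argument.

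I do not expect a real obstacle here: the whole proof rests on the trigonometric inequality $2|\sin(\theta/2)|\le|\theta|$ together with Cauchy--Schwarz, and on the fact that the right-hand side of the pointwise bound is independent of $\k$, which is exactly what lets the supremum pass through harmlessly. The only point requiring care is the reduction step, where one must invoke \fer{bbc} with the correct parameters and keep in mind that the $1/|T|^d$ normalization in \fer{PFM} is essential (without it the first inequality would carry an extraneous factor $|T|^{d/p}$). A self-contained alternative for $d_1\le W_1$ would be Kantorovich--Rubinstein duality, testing $\mu-\nu$ against the $1$-Lipschitz functions $\x\mapsto \pm\cos\langle\x,\k\rangle/|\k|$ and $\x\mapsto \pm\sin\langle\x,\k\rangle/|\k|$, but the coupling argument above is more direct in the discrete setting.
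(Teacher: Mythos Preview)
Your proposal is correct and follows essentially the same route as the paper. The core of both arguments is the pointwise estimate
\[
|\hat\mu(\k)-\hat\nu(\k)| \le |\k|\sum_{\x,\y\in G_N}|\x-\y|\,\pi_{\x\y},
\]
obtained from a transport plan via $|e^{-i\theta}-1|\le|\theta|$ and Cauchy--Schwarz; the paper then plugs this directly into the normalized integral defining $f_{1,2}$, whereas you phrase the same final step as the chain $f_{1,2}\le d_1\le W_1$ using \fer{bbc}, which is only a cosmetic difference.
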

\begin{proof}

Let  $\pi$ be a transport plan between $\mu$ and $\nu$. It holds
\begin{eqnarray*}
|\mi-\ni|&=&\bigg| \sum_{\x,\y \in G_N}e^{-i\k\cdot\x}\pi(\x,\y) - \sum_{\x,\y \in G_N}e^{-i\k\cdot\y}\pi(\x,\y)\bigg|\\
&=&\bigg| \sum_{\x,\y \in G_N}\big(e^{-i\k\cdot\x}-e^{-i\k\cdot\y}\big)\pi(\x,\y) \bigg|\\
&\leq&  
\sum_{\x,\y \in G_N} \big| e^{-i\k\cdot\x}-e^{-i\k\cdot\y} \big| \pi(\x,\y)  \\
&=&  
\sum_{\x,\y \in G_N} \big| 1 - e^{i\k\cdot(\x-\y)} \big| \pi(\x,\y)  \\
&\leq&  
\sum_{\x,\y \in G_N}\big| \k\cdot(\x-\y) \big| \pi(\x,\y)\\
&\leq& |\k|\sum_{\x,\y \in G_N}|\x-\y|\pi(\x,\y).
\end{eqnarray*}
Hence, if $\pi$ is the optimal transport plan, we conclude with the inequality
\begin{equation}
|\mi-\ni|\leq |\k|W_1(\mu,\nu).
\label{thisqui}
\end{equation}
Using inequality (\ref{thisqui}) into  definition (\ref{eqf11}), we finally obtain the bound
\begin{equation}
f_{1,2}(\mu,\nu)\leq \bigg( \intaTnormdue \dfrac{\big(|\k|W_1(\mu,\nu) \big)^2}{|\k|^2}d\k \bigg)^{\frac{1}{2}}=W_1(\mu,\nu).
\label{disuguaglianzaw1}
\end{equation}
{}
\end{proof}
Since $W_1(\mu,\nu)<+\infty$ for every $\mu,\nu\in\mathcal{P}(G_N)$,  inequality (\ref{disuguaglianzaw1}) implies that $f_{1,2}$ is bounded in correspondence to any pair of probability measures over the grid $G_N$. 

We now show that $f_{1,2}$ and $W_1$ satisfy a reverse inequality, thus concluding that the two metrics are equivalent. 
\begin{theorem}\label{th4}
 For any pair of measures $\mu,\nu\in \mathcal{P}(G_N)$ it holds
    \begin{equation}\label{rev}
    W_1(\mu,\nu)\leq  \dfrac{T^2}{2\pi} f_{1,2} (\mu,\nu).
    \end{equation}
\end{theorem}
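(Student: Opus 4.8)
The plan is to bound $W_1$ from above by passing to the dual (Kantorovich--Rubinstein) formulation and then transferring the resulting pairing to the Fourier side, where the weight $1/|\k|^2$ defining $f_{1,2}$ is produced by a single Cauchy--Schwarz step. Concretely, writing $h:=\mu-\nu$ for the (signed, zero-mass) difference, I would start from the standard duality
\[
W_1(\mu,\nu)=\sup\Big\{ \textstyle\sum_{\x\in G_N}\phi(\x)\,h(\x)\ :\ \mathrm{Lip}(\phi)\le 1\Big\},
\]
the supremum running over $1$-Lipschitz functions $\phi$ on $G_N$. Since $\sum_{\x\in G_N}h(\x)=0$, I may subtract the mean and assume $\sum_{\x\in G_N}\phi(\x)=0$; this normalization is exactly what will make the final constant come out as stated. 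It then suffices to bound the pairing $\sum_\x \phi(\x)h(\x)$ by $\tfrac{T^2}{2\pi}f_{1,2}(\mu,\nu)$ uniformly in admissible $\phi$.

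The next step is a Parseval identity on the period box $[0,T]^2$. Viewing $\hat\phi$ and $\hat h$ as $T$-periodic functions whose Fourier coefficients in the conjugate (spatial) variable are precisely $\phi(\x)$ and $h(\x)$, the orthogonality relation $\int_{[0,T]^2}e^{-i\langle \x-\y,\k\rangle}\,d\k=T^2\,\delta_{\x,\y}$ for $\x,\y\in G_N$ gives
\[
\sum_{\x\in G_N}\phi(\x)\,h(\x)=\frac{1}{T^2}\int_{[0,T]^2}\hat\phi(\k)\,\overline{\hat h(\k)}\,d\k .
\]
Inserting the factor $|\k|\cdot|\k|^{-1}$ and applying Cauchy--Schwarz bounds the modulus of this quantity by the product of $\big(\tfrac{1}{T^2}\int_{[0,T]^2}|\k|^2|\hat\phi(\k)|^2\,d\k\big)^{1/2}$ and $\big(\tfrac{1}{T^2}\int_{[0,T]^2}|\hat h(\k)|^2/|\k|^2\,d\k\big)^{1/2}$, and the second factor is exactly $f_{1,2}(\mu,\nu)$.

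It remains to bound the first factor by $T^2/(2\pi)$. Here I would estimate $|\k|^2\le 2T^2$ crudely on the box and use Parseval again, $\int_{[0,T]^2}|\hat\phi|^2\,d\k=T^2\sum_\x\phi(\x)^2$, to obtain $\tfrac{1}{T^2}\int_{[0,T]^2}|\k|^2|\hat\phi|^2\le 2T^2\sum_\x\phi(\x)^2$. The sum $\sum_\x\phi(\x)^2$ is then controlled by Popoviciu's inequality: for the uniform measure on the $N^2=(T/2\pi)^2$ points of $G_N$, the variance of $\phi$ is at most $\tfrac14(\max\phi-\min\phi)^2\le \tfrac14\,\mathrm{diam}(G_N)^2\le \tfrac12$, because $\phi$ is $1$-Lipschitz and $G_N\subset[0,1)^2$; with the zero-mean normalization this reads $\sum_\x\phi(\x)^2\le N^2/2=T^2/(8\pi^2)$. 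Combining the two estimates gives a first factor bounded by $\big(2T^2\cdot T^2/(8\pi^2)\big)^{1/2}=T^2/(2\pi)$, and taking the supremum over $\phi$ closes the argument.

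The delicate point is not any individual inequality but the bookkeeping on the frequency torus: one must justify that the discrete pairing equals the normalized box integral with the factor $1/T^2$, i.e.\ that the exponentials $\{e^{-i\langle\x,\cdot\rangle}\}_{\x\in G_N}$ are orthogonal over $[0,T]^2$ with squared norm $T^2$, which is precisely where the relation $T=2\pi N$ (and the fact that $\#G_N=N^2$) enters. Both downstream steps, the bound $|\k|^2\le 2T^2$ and Popoviciu's inequality, are deliberately lossy; it is a small miracle of the normalizations that they nonetheless multiply to give exactly the clean constant $T^2/(2\pi)$, so the only place where the zero-mean reduction of $\phi$ is genuinely needed is its pairing with the variance bound.
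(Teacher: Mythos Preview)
Your argument is correct and follows the same route as the paper: Kantorovich--Rubinstein duality, a Parseval identity for the discrete sums over $[0,T]^2$, Cauchy--Schwarz, the crude bound $|\k|^2\le 2T^2$, and the estimate $\sum_{\x}\phi(\x)^2\le N^2/2$. The only cosmetic differences are the order in which Parseval and Cauchy--Schwarz are applied, and that the paper normalizes $\phi$ to vanish at the center of the square (so $|\phi|\le\sqrt2/2$ pointwise) rather than to have zero mean with Popoviciu; both choices produce the identical bound $\sum_{\x}\phi(\x)^2\le N^2/2$ and hence the same constant $T^2/(2\pi)$.
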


\begin{proof}
    Owing to the dual characterization of the $W_1$ distance (see \cite{villani2008oan}, Chapter 5), there exists a $1$-\lip\  function $\phi$ such that
    \begin{equation*}
    W_1(\mu,\nu)=\int_{\R^2}\phi(\x) d\mu(\x) - \int_{\R^2}\phi(\x) d\nu(\x).
    \end{equation*}
    Since $\mu$ and $\nu$ are discrete and supported on a subset of $[0,1]^2$, we can write
    \begin{equation*}
    W_1(\mu,\nu)=\sum_{\bm x \in G_N}\phi(\x)\big(\mu_{\x}-\nu_{\x}\big).
    \end{equation*}
    Therefore, resorting to the fact that both the measures have the same mass, for any given constant $c\in \erre$ we have
    \begin{equation*}
    W_1(\mu,\nu)=\sum_{\bm x \in G_N}\big(\phi(\x)+c\big)\big(\mu_{\x}-\nu_{\x}\big).
    \end{equation*}
    The last identity permits to choose $\phi$ such that $\phi(\frac{N}{2},\frac{N}{2})=0$. Since $\phi$ is $1$-\lip, we conclude that
    \begin{equation}\label{bb}
    |\phi(\x)|\leq \dfrac{\sqrt{2}}{2}, \qquad\qquad \forall \x\in G_N.
    \end{equation}
    By H\"older inequality we obtain
    \begin{equation*}
    W_1(\mu,\nu)\leq \bigg(\sum_{\bm x \in G_N}|\phi(\x)|^2 \bigg)^{\frac{1}{2}}\bigg(\sum_{\bm x \in G_N}|\mu_{\x}-\nu_{\x}|^2 \bigg)^{\frac{1}{2}}.
    \end{equation*}
    Since  
    \[
	\sum_{\bm x \in G_N} |\mu_{\x}-\nu_{\x}|^2=
	\intaTnormdue A(\k) B(\k) d\k
	\]
	where
	\begin{eqnarray*}
	A(\k)&=&\sum_{\x\in G_N}(\mu_{\x}-\nu_{\x})e^{-i<\x,\k>}\\
	B(\k)&=&\sum_{\y \in G_N}(\mu_{\y}-\nu_{\y})e^{+i<\y,\k>}
	\end{eqnarray*}
	we have 
	\[
	\sum_{\bm x \in G_N} |\mu_{\x}-\nu_{\x}|^2=
	\intaTnormdue\big|\mi-\ni\big|^2d\k.
	\]
    Now using \eqref{bb} we obtain
    \begin{eqnarray*}
    W_1(\mu,\nu)&\leq& \dfrac{\sqrt{2}N}{2}\bigg(\intaTnormdue\big|\mi-\ni\big|^2d\k \bigg)^{\frac{1}{2}}\\ &=&\dfrac{\sqrt{2}N}{2}\bigg(\intaTnormdue|\k|^2\dfrac{|\hat\mu(\k)-\hat\nu(\k)|^2}{|\k|^2}d\k \bigg)^{\frac{1}{2}}.
    \end{eqnarray*}
    Since $|{\bm k}|^2\leq 2T^2$ and $T=2\pi N$, we can finally conclude that
    \begin{equation*}
    W_1(\mu,\nu)\leq \dfrac{T^2}{2\pi} \bigg(\intaTnormdue\dfrac{|\mi-\ni|^2}{|\k|^2}d\k \bigg)^{\frac{1}{2}}=\dfrac{T^2}{2\pi} f_{1,2} (\mu,\nu).
    \end{equation*}
    {}
\end{proof}

In consequence of the previous estimates, it is immediate to show that the metrics $d_s$ and $W_1$ are equivalent. This is proven in the following

\begin{corollary}\label{cor1}
For any pair of measures $\mu,\nu\in \mathcal{P}(G_N)$ 
   \[
   		d_1 (\mu,\nu)\le W_1(\mu,\nu)\leq \dfrac{T^2}{2\pi} d_1 (\mu,\nu).
   \]
\end{corollary}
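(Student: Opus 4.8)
The plan is to derive both inequalities from estimates that are already available in the excerpt, with no genuinely new computation required: the lower bound comes from reading the proof of Theorem~\ref{th3} more carefully, while the upper bound comes from composing Theorem~\ref{th4} with the bound \eqref{bbc} of Remark~\ref{lpstyle}.

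For the lower inequality $d_1(\mu,\nu)\le W_1(\mu,\nu)$, I would revisit the chain of estimates in the proof of Theorem~\ref{th3}. That argument in fact establishes the \emph{pointwise} bound \eqref{thisqui}, namely $|\hat\mu(\k)-\hat\nu(\k)|\le |\k|\,W_1(\mu,\nu)$ for every $\k$, obtained along the optimal transport plan. Dividing by $|\k|$ and taking the supremum over $\k\in[0,2\pi N]^2\setminus\{0\}$ gives at once
\[
 d_1(\mu,\nu)=\sup_{\k}\frac{|\hat\mu(\k)-\hat\nu(\k)|}{|\k|}\le W_1(\mu,\nu).
\]

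For the upper inequality $W_1(\mu,\nu)\le \tfrac{T^2}{2\pi}\,d_1(\mu,\nu)$, I would simply chain two known facts. Theorem~\ref{th4} gives $W_1(\mu,\nu)\le \tfrac{T^2}{2\pi}\, f_{1,2}(\mu,\nu)$, and the H\"older-type bound \eqref{bbc}, specialized to $s=1$ and $p=2$, gives $f_{1,2}(\mu,\nu)\le d_1(\mu,\nu)$. Composing the two immediately yields the claim.

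There is essentially no obstacle here; the only point worth stressing is conceptual rather than technical. The estimate underpinning Theorem~\ref{th3} is pointwise in $\k$, so that dividing by $|\k|$ \emph{before} taking the supremum recovers the sup-metric $d_1$ itself, and not merely its $L^2$ average $f_{1,2}$. Once this observation is made, both inequalities follow by inspection from results proved earlier, and the constants $1$ and $T^2/(2\pi)$ are inherited verbatim from the corresponding bounds for $f_{1,2}$.
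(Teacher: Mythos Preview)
Your proposal is correct and matches the paper's own proof essentially verbatim: the paper also derives the first inequality directly from the pointwise bound \eqref{thisqui} and obtains the second by combining Theorem~\ref{th4} with the H\"older bound \eqref{bbc}.
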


\begin{proof}

The first inequality is a consequence of bound \fer{thisqui}. The second one follows from inequality \fer{bbc}. 
    {}
\end{proof}
\subsection{\emph{Equivalence with the Wasserstein metric $W_2$}}

The aim of this Section is to show the equivalence of the Fourier-based metric $f_{2,2}$ and the Wasserstein metric $W_2$. 
Let $s=2$. In this case, the PFM takes the form
\[    
	f_{2,2}(\mu,\nu)=\bigg(\intaTnormdue\dfrac{|\mi-\ni|^2}{|\k|^4}d\k \bigg)^{\frac{1}{2}}.
\]
Clearly, the distance between the two probability measures is well-defined only when $\mu$ and $\nu$ possess the same expected value. Since, in general this is not the case, we start by translating the measures, as done in Section \ref{recallot}, in order to satisfy this condition.
The following proposition shows that, for probability measures with the same center, 
the topology induced by $f_{2,2}$ is not stronger than the topology induced by $W_2$.

\begin{theorem}
\label{mezzaeq22}
For any pair of measures $\mu,\nu\in \mathcal{P}(G_N)$  such that $\mmu=\nnu$,
 it holds
    \begin{equation}\label{dd}
    f_{2,2}(\mu,\nu)\leq 2\sqrt2 W_2(\mu,\nu).
    \end{equation}
    In particular, $f_{2,2}(\mu,\nu)<\infty.$
\end{theorem}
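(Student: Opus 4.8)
The plan is to mirror the proof of Theorem \ref{th3}, but to replace the first-order estimate on $|\hat\mu(\k)-\hat\nu(\k)|$ by a second-order one: the extra order of vanishing at the origin is exactly what the kernel $|\k|^{-4}$ (as opposed to the $|\k|^{-2}$ of $f_{1,2}$) requires for integrability near $\k=\bm 0$. I would fix a transport plan $\pi\in\Pi(\mu,\nu)$ that is optimal for $W_2$ (which exists, since the measures are finitely supported on $G_N$) and start, exactly as in Theorem \ref{th3}, from
\[
\hat\mu(\k)-\hat\nu(\k)=\sum_{\x,\y\in G_N}\big(e^{-i\k\cdot\x}-e^{-i\k\cdot\y}\big)\pi(\x,\y).
\]
The decisive point is that, because $\mmu=\nnu$, the missing first-order term may be inserted at no cost: since $\sum_{\x,\y\in G_N}(\x-\y)\pi(\x,\y)=\mmu-\nnu=\bm 0$, one obtains
\[
\hat\mu(\k)-\hat\nu(\k)=\sum_{\x,\y\in G_N}\big(e^{-i\k\cdot\x}-e^{-i\k\cdot\y}+i\,\k\cdot(\x-\y)\big)\pi(\x,\y).
\]
This is precisely where the hypothesis $\mmu=\nnu$ enters, and it is what upgrades the estimate from order $|\k|$ to order $|\k|^2$.

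The core of the argument is then a pointwise bound on the bracketed quantity $g(\x,\y,\k):=e^{-i\k\cdot\x}-e^{-i\k\cdot\y}+i\,\k\cdot(\x-\y)$. I would deliberately \emph{avoid} the naive termwise Taylor estimate $|e^{-i\k\cdot\x}-1+i\k\cdot\x|\le\tfrac12(\k\cdot\x)^2$: summed against $\pi$ it only produces a bound in terms of the second moments of $\mu$ and $\nu$, which settles finiteness but destroys all the $|\x-\y|$ smallness needed to recover $W_2$. Instead I would integrate along the segment $\z_s:=\y+s(\x-\y)$, $s\in[0,1]$, to get the exact identity
\[
g(\x,\y,\k)=-i\,\k\cdot(\x-\y)\int_0^1\big(e^{-i\k\cdot\z_s}-1\big)\,ds.
\]
Because each $\z_s$ lies in the convex hull of $G_N\subset[0,1)^2$, one has $|\z_s|\le\sqrt2$, hence $|e^{-i\k\cdot\z_s}-1|\le|\k\cdot\z_s|\le\sqrt2\,|\k|$, and therefore $|g(\x,\y,\k)|\le\sqrt2\,|\k|^2\,|\x-\y|$ uniformly in $\k$.

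Combining the two steps gives, for every $\k$,
\[
\frac{|\hat\mu(\k)-\hat\nu(\k)|}{|\k|^2}\le \sqrt2\sum_{\x,\y\in G_N}|\x-\y|\,\pi(\x,\y)\le\sqrt2\Big(\sum_{\x,\y\in G_N}|\x-\y|^2\pi(\x,\y)\Big)^{1/2}=\sqrt2\,W_2(\mu,\nu),
\]
using Cauchy--Schwarz (recall $\sum\pi=1$) and the optimality of $\pi$ in the last equality; crucially the right-hand side no longer depends on $\k$. Inserting this into the definition of $f_{2,2}$ and using $\intaTnormdue d\k=1$ then yields $f_{2,2}(\mu,\nu)\le\sqrt2\,W_2(\mu,\nu)$, which is in particular the claimed bound $f_{2,2}(\mu,\nu)\le 2\sqrt2\,W_2(\mu,\nu)$, and also shows $f_{2,2}(\mu,\nu)<\infty$; the stated constant $2\sqrt2$ is reproduced (with room to spare) if one estimates the interpolation factor more coarsely. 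The main obstacle is exactly the manoeuvre in the first paragraph together with the interpolation identity: one must recognize that the equal-mean cancellation has to be applied to the whole plan-sum rather than termwise, and that the $|\x-\y|$ factor indispensable for a $W_2$ bound survives only through the segment representation of $g$, not through a direct Taylor remainder.
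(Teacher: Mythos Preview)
Your proof is correct and shares the paper's overall architecture: both arguments start from the transport-plan representation of $\hat\mu-\hat\nu$, exploit $\mmu=\nnu$ to insert the first-order term $i\k\cdot(\x-\y)$ for free, obtain a uniform pointwise bound on $|\hat\mu(\k)-\hat\nu(\k)|/|\k|^2$, and then integrate the constant over the normalized period cell.

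The genuine difference lies in how the second-order remainder is estimated. The paper uses an algebraic splitting,
\[
e^{-i\k\cdot\x}-e^{-i\k\cdot\y}-i\k\cdot(\x-\y)
=e^{-i\k\cdot\y}\big(e^{-i\k\cdot(\x-\y)}-1-i\k\cdot(\x-\y)\big)+i\k\cdot(\x-\y)\big(e^{-i\k\cdot\y}-1\big),
\]
bounds the two pieces via $|e^{i\theta}-1-i\theta|\le\theta^2/2$ and $|e^{i\theta}-1|\le|\theta|$, and after Cauchy--Schwarz and the crude bounds $W_2(\mu,\nu)\le 2\sqrt2$, $(\sum|\y|^2\nu_{\y})^{1/2}\le\sqrt2$ arrives at the constant $2\sqrt2$. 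Your route replaces this two-term split by the single integral identity
\[
e^{-i\k\cdot\x}-e^{-i\k\cdot\y}+i\k\cdot(\x-\y)=-i\,\k\cdot(\x-\y)\int_0^1\big(e^{-i\k\cdot\z_s}-1\big)\,ds,
\]
and bounds the integrand using $|\z_s|\le\sqrt2$. This is more economical: it avoids the intermediate appearance of $W_2^2$ and of the second moment of $\nu$, and it delivers the sharper constant $\sqrt2$ rather than $2\sqrt2$. In short, both proofs hinge on the same equal-mean cancellation and the same use of $G_N\subset[0,1)^2$, but your interpolation-along-the-segment device packages the estimate more tightly than the paper's Taylor-remainder decomposition.
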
{}
\begin{proof}

For any given pair of probability measures $\mu$ and $\nu$ in $\mathcal{P}(G_N)$, with centers $\mmu=\nnu$, we have
\begin{equation*}
i\k \sum_{\x\in G_N}\x \mu_{\x} = i \k \sum_{\y\in G_N} \y\nu_{\y}.
\end{equation*}
For any transport plan $\pi$ between $\mu$ and $\nu$, we can rewrite the previous relations in the form
\begin{equation}\label{cc}
i\k\sum_{\x,\y\in G_N}(\x-\y)\pi_{\x,\y}=0.
\end{equation}
Using identity \fer{cc} we obtain
\begin{eqnarray*}
	\hat{\mu}(\k)-\hat{\nu}(\k)&=& \sum_{\x\in G_N}  \mu_{\x}  e^{-i\k \cdot \x}- \sum_{\y\in G_N}  \nu_{\y} e^{-i\k \cdot \y}\\
&=& \sum_{\x,\y\in G_N} \bigg( e^{-i\k \cdot \x}-e^{-i\k\cdot \y}- i\k\cdot(\x-\y) \bigg)\pi_{\x,\y}   \\
&=&  \sum_{\x,\y\in G_N} e^{-i\k\cdot \y} \big( e^{-i\k \cdot(\x-\y)}-1-i\k\cdot(\x-\y) \big)\pi_{\x,\y}\\
&\quad&+  \sum_{\x,\y\in G_N} i\k\cdot(\x-\y)(e^{-i\k\cdot\y}-1)\pi_{\x,\y}.
\end{eqnarray*}
Using that for all $\theta \in \R$
\begin{align*}
	|e^{i\theta}-1|&\leq |\theta|, \\
	|e^{i\theta}-1-i\theta|&\leq \frac{\theta^2}2
\end{align*}	
we obtain
\begin{eqnarray*}
|\hat{\mu}(\k)-\hat{\nu}(\k)|&\leq& \frac{|\k|^2}{2}\sum_{\x,\y\in G_N} |\x-\y|^2\pi_{\x,\y} +|\k|^2 \sum_{\x,\y\in G_N} |\x-\y||\y|\pi_{\x,\y} \\
&\leq& \frac{|\k|^2}{2}\sum_{\x,\y\in G_N} |\x-\y|^2\pi_{\x,\y} \\
&\quad& +|\k|^2\bigg( \sum_{\x,\y\in G_N} |\y|^2\pi_{\x,\y} \bigg)^{\frac{1}{2}}\bigg( \sum_{\x,\y\in G_N} |\x-\y|^2\pi_{\x,\y}\bigg)^{\frac{1}{2}}.\end{eqnarray*}
In particular, if we take $\pi$ as the optimal transportation plan between $\mu$ and $\nu$ for the cost $|\x-\y|^2$ we get
\begin{align*}
	\frac{|\hat{\mu}(\k)-\hat{\nu}(\k)|}{|\k|^2}&\leq \frac{W_2^2(\mu,\nu)}{2}+ 
		\bigg( \sum_{\y\in G_N}|\y|^2\nu_{\y} \bigg)^{\frac{1}{2}}W_2(\mu,\nu)\\
		& =W_2(\mu,\nu) \left( \frac{W_2(\mu,\nu)}{2} + \left( \sum_{\y\in G_N}|\y|^2\nu_{\y} \right)^{\frac{1}{2}}\right).
\end{align*}
Since 
\[
	W_2(\mu,\nu)\leq W_2(\mu,\delta) + W_2(\delta,\nu) \leq \left(\sum_{\x\in G_N}|\x|^2\mu_{\x}\right)^{\frac{1}{2}} + \left(\sum_{\y\in G_N}|\y|^2\nu_{\y}\right)^{\frac{1}{2}},
\]
and,
 as $\mu$ and $\nu$ are supported in  $[0,1]^2$,
 \[
  \sqrt{\sum_{\x\in G_N}|\x|^2 \mu_{\x}} \leq \sqrt 2,\quad \sqrt{\sum_{\y\in G_N}|\y|^2 \nu_{\y}} \le \sqrt2,
 \]
we obtain \fer{dd}:
 \[
 \dfrac{|\mi-\ni|}{|\k|^2} \le 2\sqrt2 W_2(\mu,\nu).
 \]
 
\end{proof}

We conclude by showing the validity of a reverse inequality,  thus proving the equivalence between $f_{2,2}$ and $W_2$.

\begin{theorem}
\label{altramezza22}
 For any pair of measures $\mu,\nu\in \mathcal{P}(G_N)$, we have the inequality
    \[
    W_2^2(\mu,\nu)\leq \dfrac{T^3}{\pi} f_{2,2}(\mu,\nu).
    \]
\end{theorem}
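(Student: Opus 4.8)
The plan is to avoid a direct dual estimate for $W_2$, which lacks the clean $1$-Lipschitz characterization exploited in the proof of Theorem~\ref{th4}, and instead to \emph{bootstrap} the desired bound from the $W_1$ results already established, combined with the monotonicity of the metrics in the parameter $s$ from Proposition~\ref{ordinos}. In other words, I would not re-run a Fourier/Plancherel computation but rather chain three inequalities whose constants multiply to exactly $T^3/\pi$.

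First I would record a purely geometric comparison between $W_2^2$ and $W_1$ that uses only the boundedness of the support. Since every point of $G_N$ lies in $[0,1]^2$, any two points satisfy $|\x-\y|\le\sqrt2=\mathrm{diam}([0,1]^2)$, whence $|\x-\y|^2\le\sqrt2\,|\x-\y|$. Evaluating both cost functionals along an optimal plan $\pi$ for $W_1$, and using that $W_2^2$ is an infimum over plans, I obtain
\[
   W_2^2(\mu,\nu)\le \int_{\R^2\times\R^2} |\x-\y|^2\,d\pi(\x,\y)\le \sqrt2\int_{\R^2\times\R^2} |\x-\y|\,d\pi(\x,\y)=\sqrt2\,W_1(\mu,\nu).
\]
Next I would invoke Theorem~\ref{th4}, namely $W_1(\mu,\nu)\le \tfrac{T^2}{2\pi}\,f_{1,2}(\mu,\nu)$, to pass from $W_1$ to the Fourier metric $f_{1,2}$, and then Proposition~\ref{ordinos} with $t=1$, $s=2$, $p=2$, $\alpha=0$, $d=2$, which gives $f_{1,2}(\mu,\nu)\le \sqrt2\,T\,f_{2,2}(\mu,\nu)$. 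Chaining the three bounds yields
\[
   W_2^2(\mu,\nu)\le \sqrt2\,\frac{T^2}{2\pi}\,f_{1,2}(\mu,\nu)\le \sqrt2\,\frac{T^2}{2\pi}\,\sqrt2\,T\,f_{2,2}(\mu,\nu)=\frac{T^3}{\pi}\,f_{2,2}(\mu,\nu),
\]
which is precisely the asserted constant.

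The one point requiring care is finiteness, since the statement is phrased for \emph{any} pair in $\mathcal P(G_N)$, including measures with different centers. When $\mmu\neq\nnu$ the metric $f_{2,2}$ is infeasible (condition \eqref{cond} fails, so $f_{2,2}=+\infty$) and the inequality holds trivially; when $\mmu=\nnu$ one has $r\ge1$, so $p(s-r-1)+\alpha=0<d$, the metric is feasible, and Proposition~\ref{ordinos} applies with a finite right-hand side. I expect the main (and only mild) obstacle to be exactly this bookkeeping: confirming that the monotonicity step is legitimate in the feasible regime, and checking that the geometric estimate $|\x-\y|^2\le\sqrt2\,|\x-\y|$ uses the sharp constant $\mathrm{diam}([0,1]^2)=\sqrt2$, so that no slack is introduced and the product of the three constants collapses neatly to $T^3/\pi$.
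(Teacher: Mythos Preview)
Your proposal is correct and follows essentially the same route as the paper: both bound $W_2^2\le\sqrt2\,W_1$ via the diameter of $[0,1]^2$, then apply Theorem~\ref{th4} and Proposition~\ref{ordinos} (with $t=1$, $s=p=2$) to reach $\tfrac{T^3}{\pi}f_{2,2}$. Your added remark on feasibility when $\mmu\neq\nnu$ is a helpful clarification that the paper leaves implicit.
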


\begin{proof}
Let $\pi$ be the optimal transportation plan between $\mu$ and $\nu$ for the cost $|\x-\y|$, since $|\x-\y|\leq \sqrt 2$ for all $\x,\y\in G_N\subset [0,1]^2$, it holds
\[
    W_2^2(\mu,\nu)\leq \sum_{\x,\y \in G_N}|\x - \y|^2\pi_{\x,\y} 
    	\leq \sum_{\x,\y \in G_N}\sqrt 2|\x - \y|\pi_{\x,\y} = \sqrt{2}W_1(\mu,\nu).
\]
%
    Then, by Theorem \ref{th4} and Proposition \ref{ordinos} with $t=1$ and $p=s=2$, we get
\[
     \sqrt2 W_1(\mu,\nu)  \leq \dfrac{\sqrt2 T^2}{2\pi} f_{1,2}(\mu,\nu) \leq  \dfrac{T^3}{\pi} f_{2,2}(\mu,\nu),
\]
which, together with the last inequality, concludes the proof.
    
\end{proof}

The previous bounds hold provided that $\mu$ and $\nu$ are centered in the same point.
However, when $\mmu-\nnu\neq 0$, we can resort, as in Section \ref{recallot}, to the new metric  
    \[
    \mathcal{F}_{2,2}(\mu,\nu) := \sqrt{\left(f_{2,2}(\mu,\nu_{\mmu-\nnu})^2+|\mmu-\nnu|^2\right)},
    \]
which is well-defined also for probability measures having  different centers. This shows that we can generalize, similarly to  Theorem \ref{teoremadistanza} and Theorem \ref{teoremaequivalenza}, the equivalence of $\mathcal{F}_{2,2}$ and $W_2$ to measures which are not centered in the same point.

\subsection{\emph{Connections with other distances}}
As discussed in \cite{torregrossa2017wealth}, the case in which $s \le 0$ leads to stronger metrics. In this case, we clearly loose  relations like \fer{dd}, that link from above the Wasserstein metric with the Fourier-based metric. An interesting case is furnished by choosing both $s=0$ and $\alpha =0$ into \fer{PFM}. The metric in this case is defined by
    \begin{eqnarray*}
    f_{0,2}(\mu,\nu)&=&\bigg(\intnormpiN |\hat{\mu}(\k)-\hat{\nu}(\k)|^2d\k\bigg)^{\frac{1}{2}}
    \\
    &=&\bigg(\sum_{\x\in G}|\mu(\x)-\nu(\x)|^2\bigg)^{\frac{1}{2}},
    \end{eqnarray*}
which is  the Total Variation distance between the probability measures $\mu$ and $\nu$. 
    
We remark that  the distance above does not require the measures to possess the same mass.  By fixing in  definition \fer{PFM} $s=0$ and $\alpha\in [0,2)$, one obtains a  sequence of metrics that interpolate between the Total Variation distance and the $W_1$ distance, namely a family of measures that move from a strong metric to a weaker one. However, if $\alpha>0$, the measures must have the same mass.
    
In the case $s<0$ and $\alpha =0$, the Fourier-based metric  \eqref{PFM} becomes
    \begin{equation*}
    f_{s,2}(\mu,\nu)=\bigg(\intnormpiN |\k|^{2|s|}|\hat{\mu}(\k)-\hat{\nu}(\k)|^2d\k \bigg)^{\frac{1}{2}}.
    \end{equation*}
    In particular, when $-s=n \in \mathbb{N}_+$, we find that
    \begin{equation*}
    f_{-n,2}(\mu,\nu)=\bigg(\intnormpiN |\k|^{2n}|\hat{\mu}(\k)-\hat{\nu}(\k)|^2d\k \bigg)^{\frac{1}{2}}.
    \end{equation*}
This metric, by Fourier identity, controls the $n$-th derivative of the measures $\mu$ and $\nu$, and does not require the measures to have the same mass.

\section{Numerical Results}
\label{Numericalresults}
We run extensive numerical tests to compare the Wasserstein metrics $W_1$ and $W_2$ with the corresponding Periodic Fourier-based Metrics $f_{1,2}^0$ and $f_{2,2}^0$. 

The goal of our tests is to compare empirically the distance values obtained with the different metrics, and to measure the runtime gain that we can achieve using the Fourier-based metrics.
In the following paragraphs, we report the main conclusions of our tests.

\paragraph{Implementation details.}
We implemented our algorithms in Python 3.7, using the Fast Fourier Transform implemented in the Numpy library \cite{numpy}. To compute the Wasserstein distances, we use the Python Optimal Transport (POT) library \cite{flamary2017pot}. All the tests are executed on a MacBook Pro 13 equipped with a 2.5 GHz Intel Core i7 dual-core and 16 GB of Ram.

\paragraph{Dataset.}
As problem instances, we use the DOTmark benchmark \cite{Dotmark}, which contains 10 classes of gray scale images, each containing 10 different images. Every image is given in the data set at the following pixel resolutions: $32 \times 32, 64 \times 64, 128 \times 128, 256 \times 256$, and $512 \times 512$. Figure \ref{fig:dot} shows the Classic, Microscopy, and Shapes images, respectively, at the highest pixel resolution (one class for each row).

\begin{figure}[!ht]
\centering
{\renewcommand{\arraystretch}{0.7}
\setlength{\tabcolsep}{0.1em}
\begin{tabular}{cccccccccc}
\includegraphics[width=0.095\linewidth]{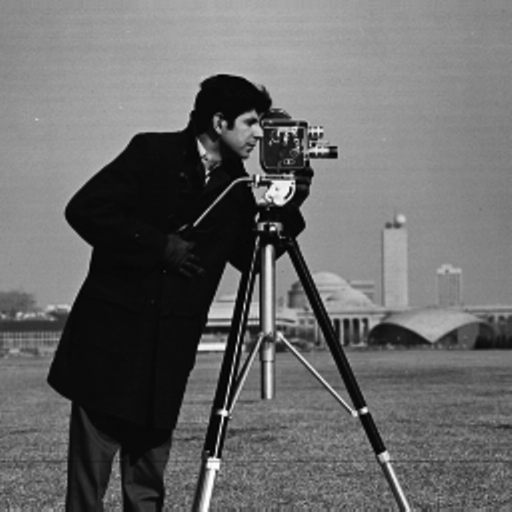}  & \includegraphics[width=0.095\linewidth]{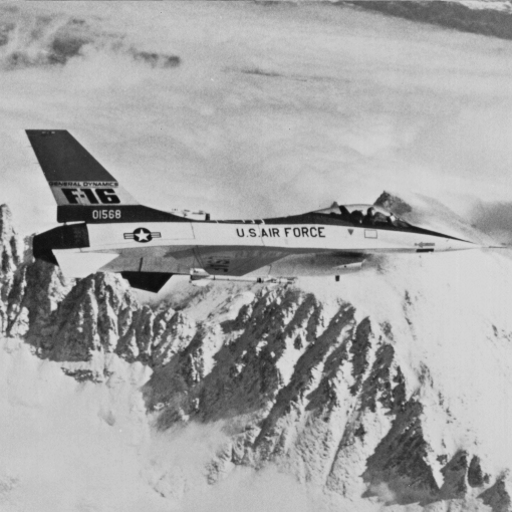} &
  \includegraphics[width=0.095\linewidth]{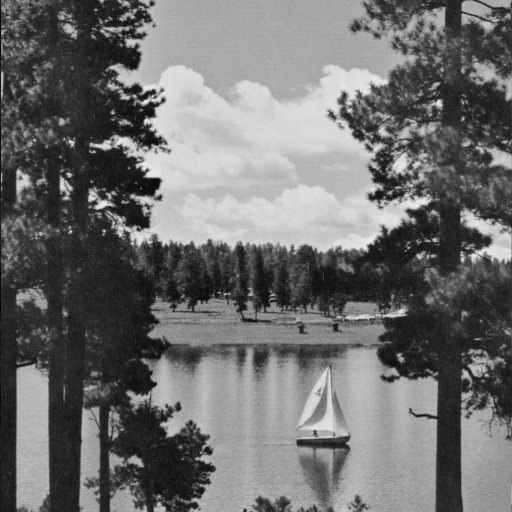} & \includegraphics[width=0.095\linewidth]{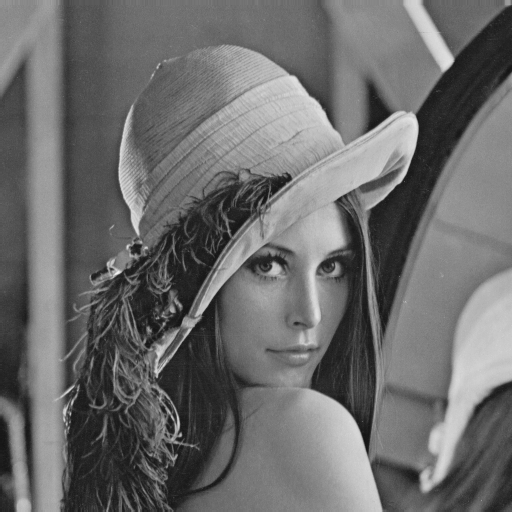} &
  \includegraphics[width=0.095\linewidth]{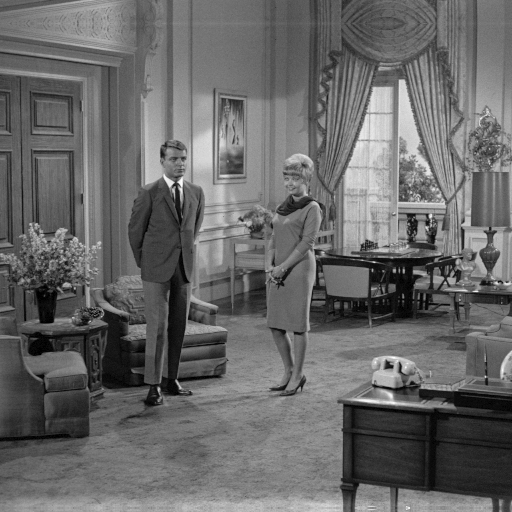} &
  \includegraphics[width=0.095\linewidth]{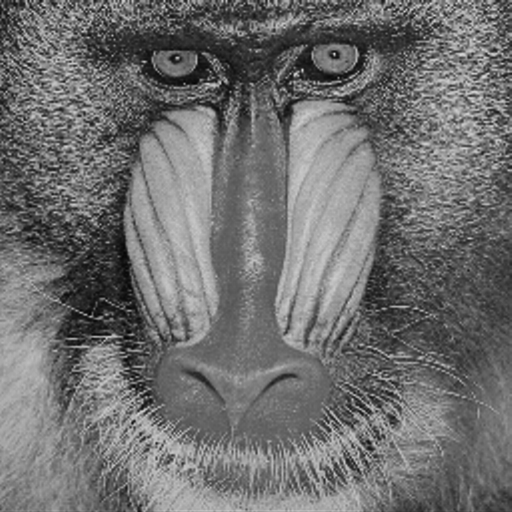}  & \includegraphics[width=0.095\linewidth]{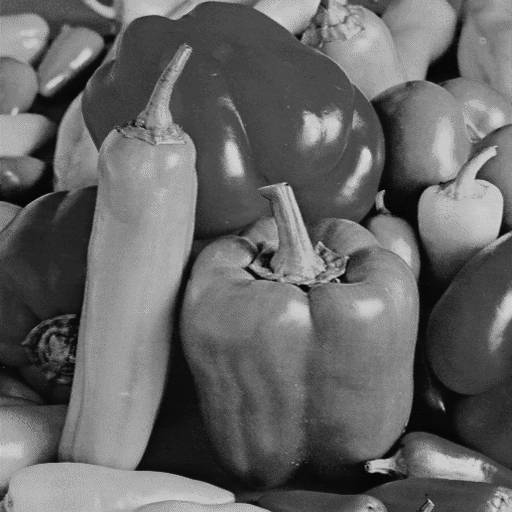} &
  \includegraphics[width=0.095\linewidth]{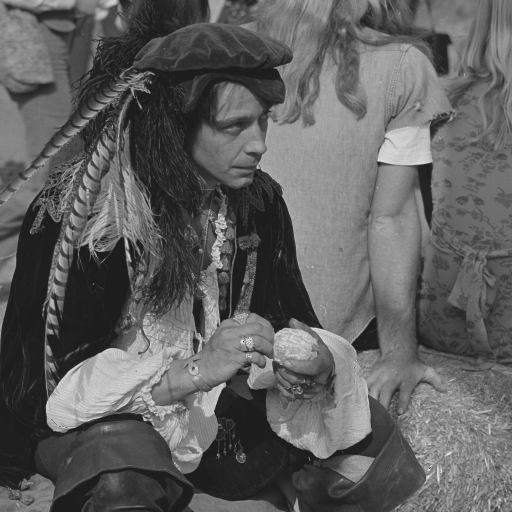} & \includegraphics[width=0.095\linewidth]{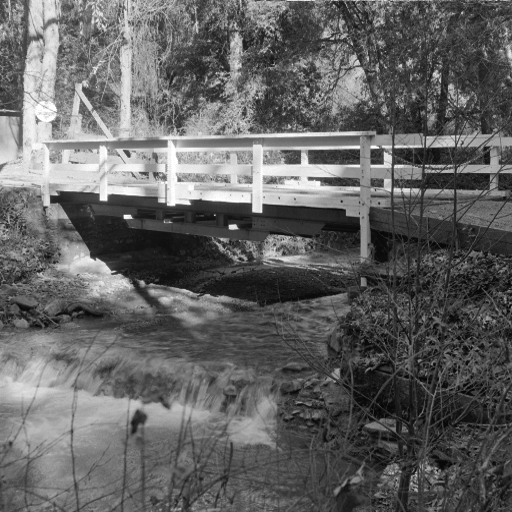} &
  \includegraphics[width=0.095\linewidth]{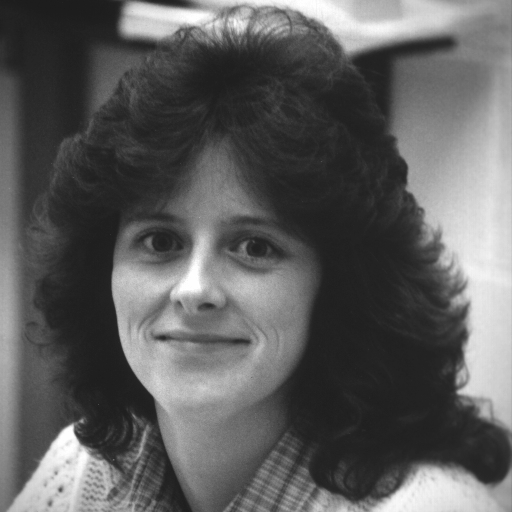} \\  
 \includegraphics[width=0.095\linewidth]{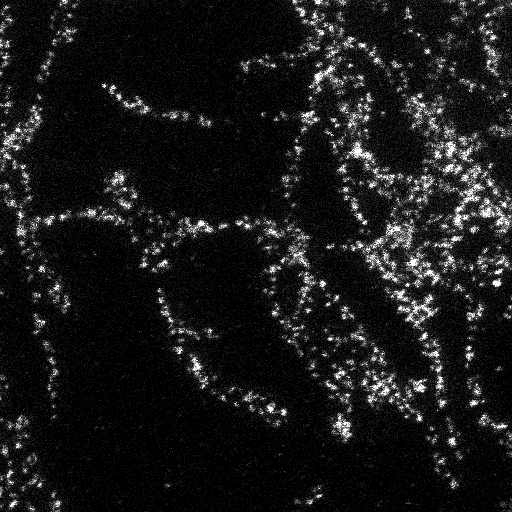}  & \includegraphics[width=0.095\linewidth]{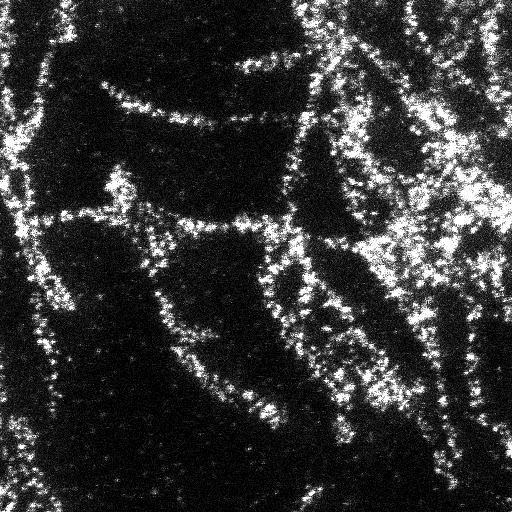} &
  \includegraphics[width=0.095\linewidth]{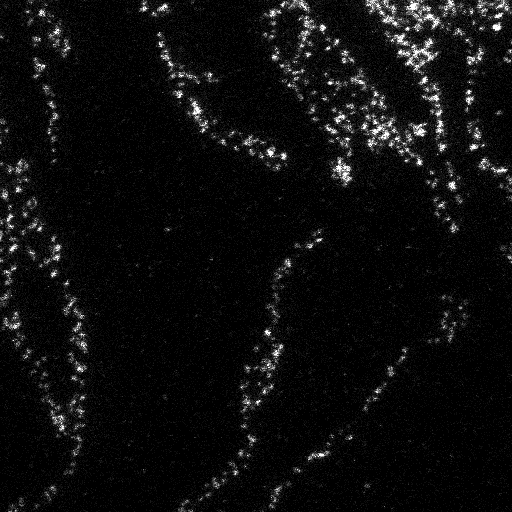} & \includegraphics[width=0.095\linewidth]{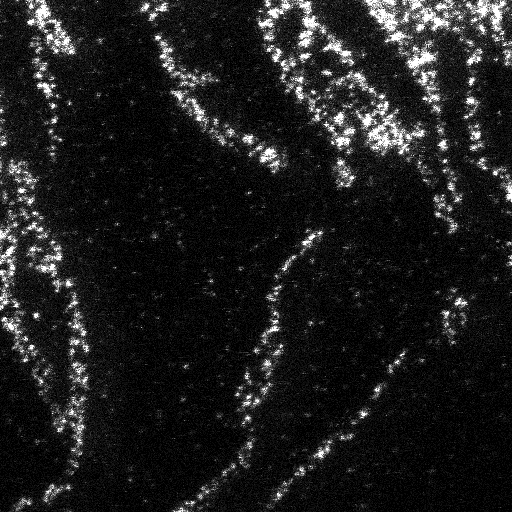} &
  \includegraphics[width=0.095\linewidth]{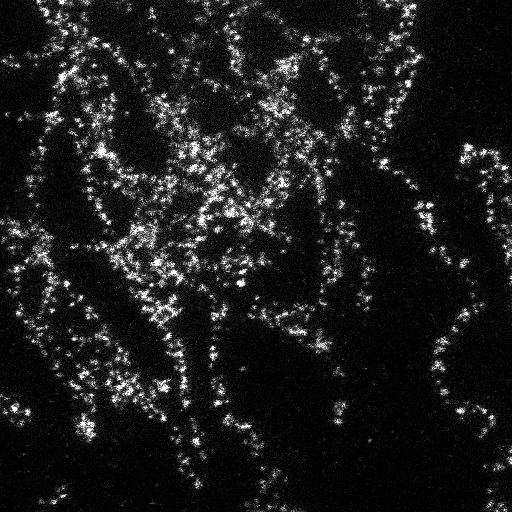} &
  \includegraphics[width=0.095\linewidth]{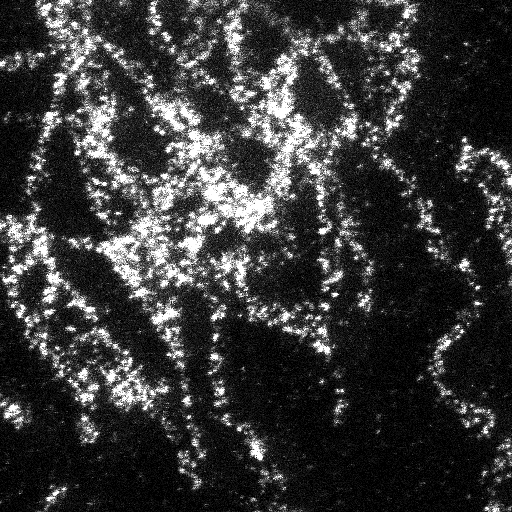}  & \includegraphics[width=0.095\linewidth]{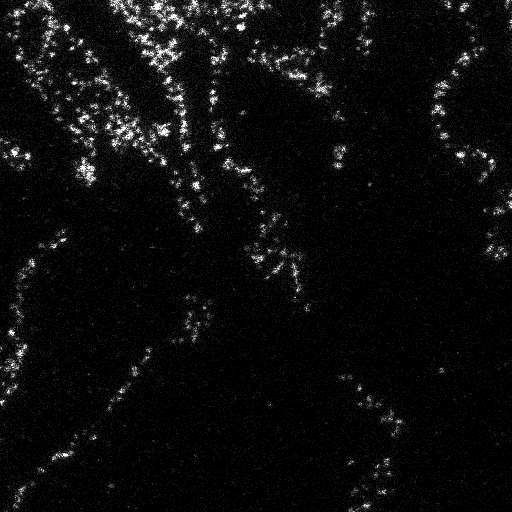} &
  \includegraphics[width=0.095\linewidth]{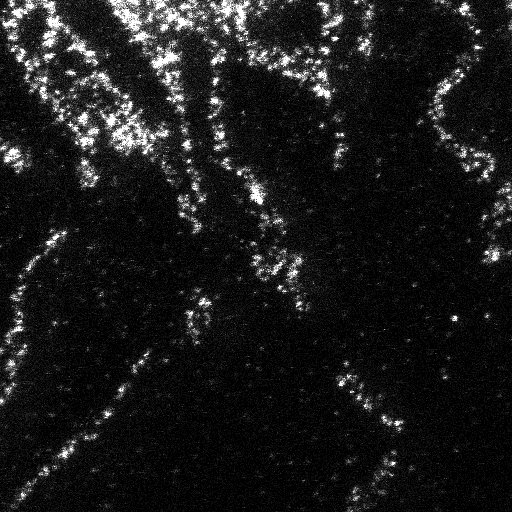} & \includegraphics[width=0.095\linewidth]{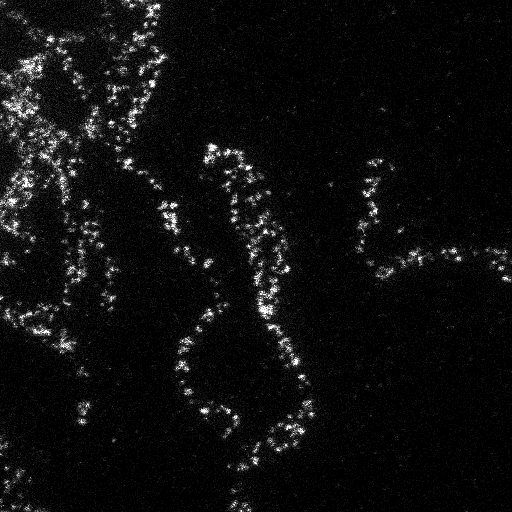} &
  \includegraphics[width=0.095\linewidth]{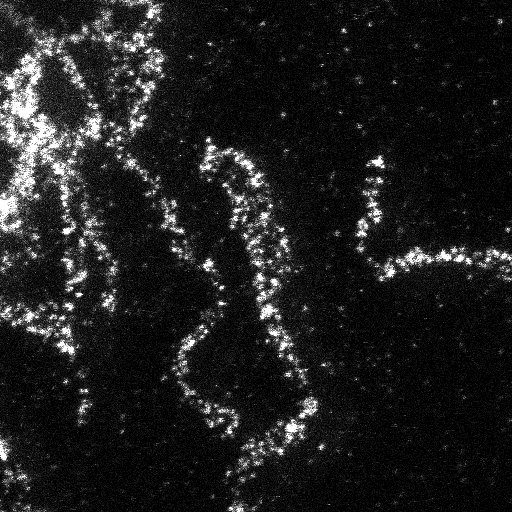} \\
 \includegraphics[width=0.095\linewidth]{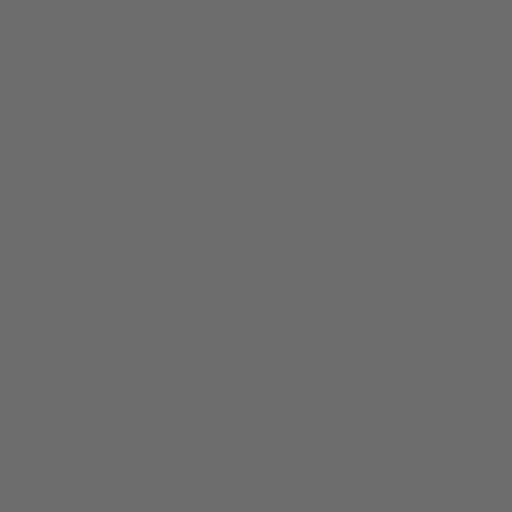}  & \includegraphics[width=0.095\linewidth]{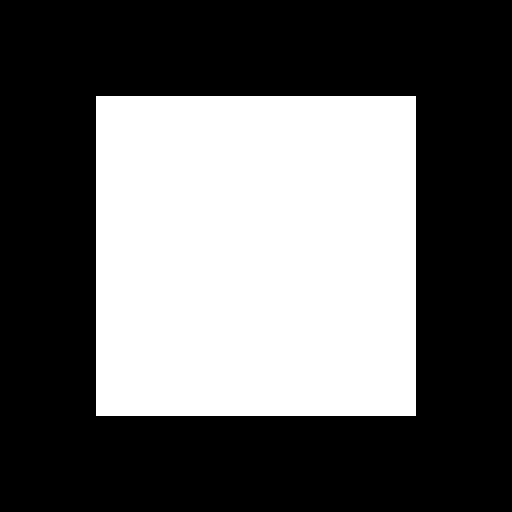} &
  \includegraphics[width=0.095\linewidth]{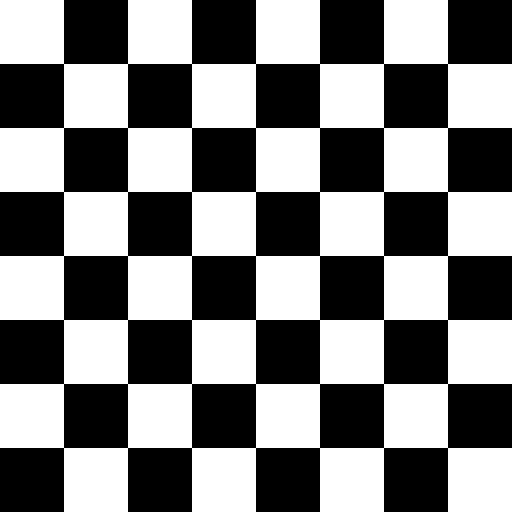} & \includegraphics[width=0.095\linewidth]{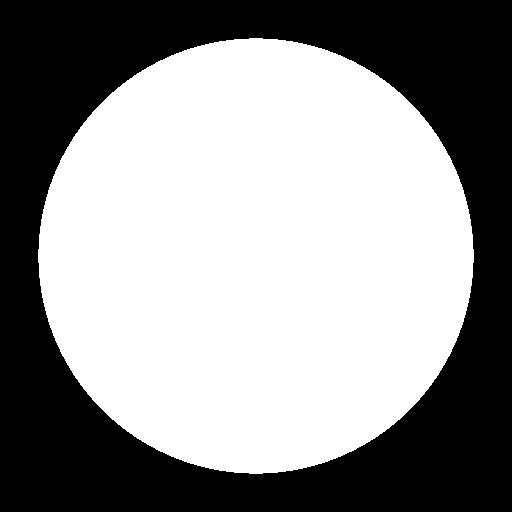} &
  \includegraphics[width=0.095\linewidth]{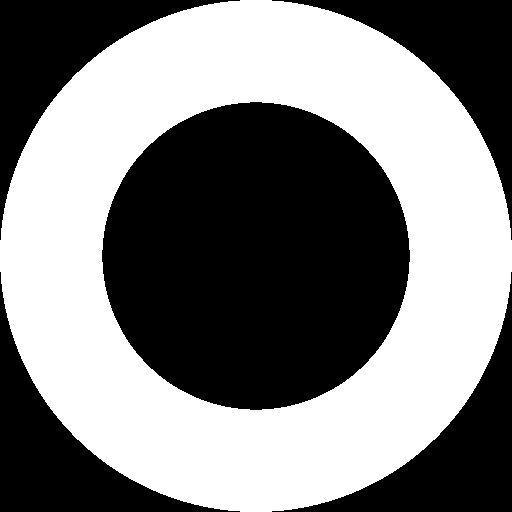} &
  \includegraphics[width=0.095\linewidth]{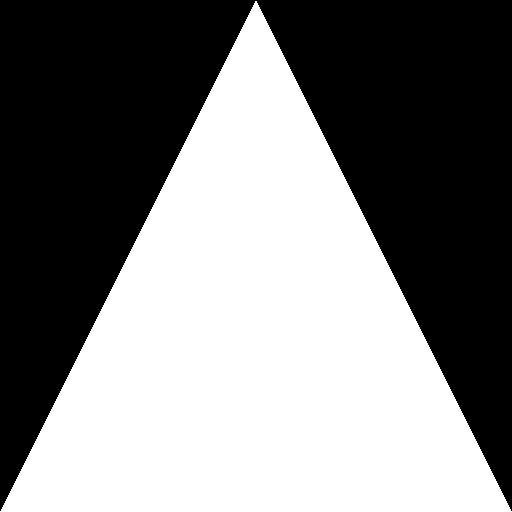}  & \includegraphics[width=0.095\linewidth]{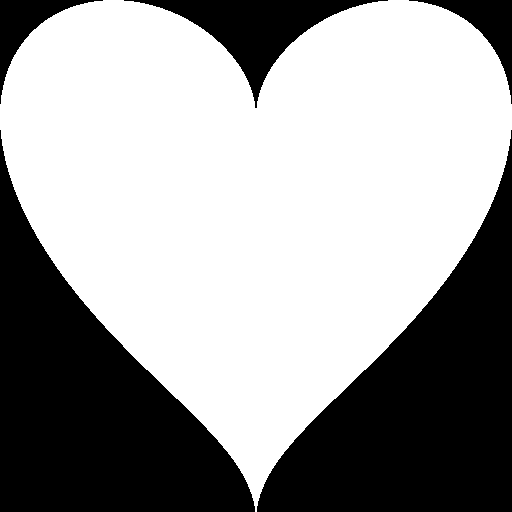} &
  \includegraphics[width=0.095\linewidth]{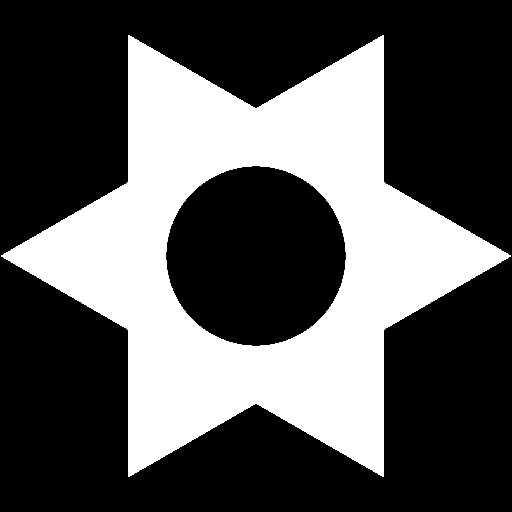} & \includegraphics[width=0.095\linewidth]{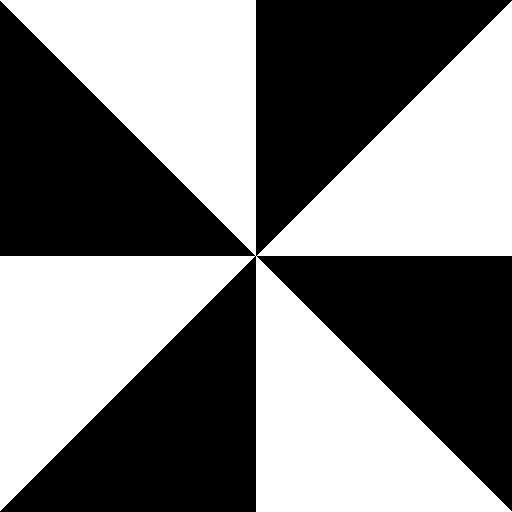} &
  \includegraphics[width=0.095\linewidth]{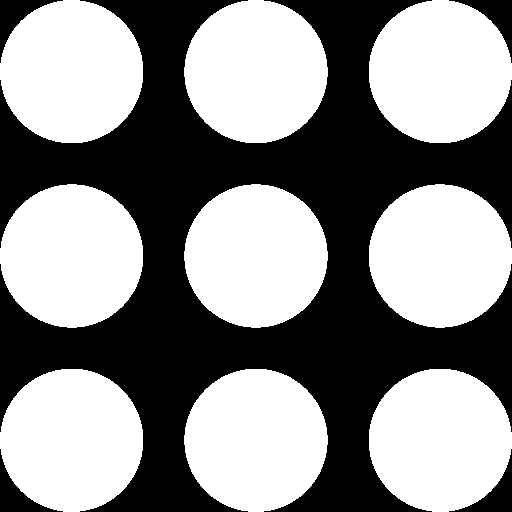} \end{tabular}}
\caption{DOTmark benchmark: Classic, Microscopy, and Shapes images. \label{fig:dot}}
\end{figure}

\paragraph{Results.}
For each pair of images of the DOTmark dataset,  the reciprocal distance values using the $W_1, W_2, f_{1,2}^{0}$  and $f_{2,2}^{0}$ metrics has been computed, and the corresponding runtime in seconds has been recorded.

The scatter plot in Figure \ref{lwbond} shows the relation between the $W_2$ and the $f_{2,2}^0$ distances for each pair of images at pixel resolution $32 \times 32$. The plot shows that not only the two metrics are theoretically equivalent, as proved in Theorem \ref{mezzaeq22} and \ref{altramezza22}, but also that they yield very similar values. A partial exception is present in the Shape class, which, however, contains artificial shape images. On the much more (application-wise) interesting Classic images, the two metrics return very close values.

\begin{figure}[t]
  \includegraphics[width=\linewidth]{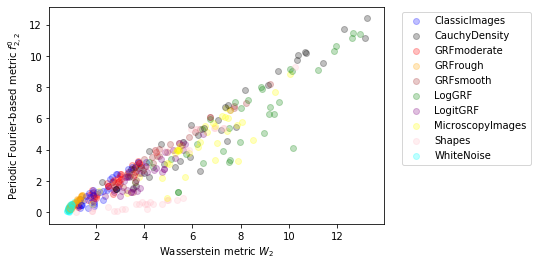}
  \caption{Wasserstein metric $W_2$ versus Periodic Fourier-based metric $f_{2,2}^0$: Comparison of distance values for 450 pair of images of size $32\times32$.\label{lwbond}}
\end{figure}

Table \ref{tab:1} reports the averages and the standard deviations of the runtime, measured in seconds, at different image size.
For each row and each metric, the averages are computed over 450 instances. The numerical results clearly show that the PFM metrics
are orders of magnitude faster, and permit to compute the distance even for the largest $512 \times 512$ images in around 10 seconds.
Note that using the POT library, we were unable to compute the $W_1$ and $W_2$ distances for images of size $256 \times 256$ and $512 \times 512$, due to memory issues.

\section{Conclusions}
\label{fine}
In this paper we showed that the class of Fourier-based metrics introduced in \cite{gabetta1995metrics} and \cite{Baringhaus1997} are useful tools to measure the distance between pairs of probability distributions, which, in reason of their equivalence,  represent an interesting alternative  in  problems where Wasserstein distances were already successfully employed.

The main result of this paper is that the constants in the equivalence relation can be precisely quantified if discrete probability measures are considered. In addition,  preliminary computational results have shown that in image processing, at difference with Wasserstein metrics,  Fourier metrics provide a noticeable performance with respect to time, even when dealing with very large images. Starting from these results, we believe it will be possible to design new numerical methods in computer imaging, which combine  theoretical convergence results with a low computational cost, at difference with Wasserstein metric, which, nowadays, has still a heavy computational load.

\begin{table}[!ht]
 \caption{Runtime vs. Image size for different metrics: The runtime is measured in seconds and reported as ``{\it Mean (StdDev)}''. Each row gives the averages over 450 instances of pairwise distances.
\label{tab:1}}
\centering
{\renewcommand{\arraystretch}{1.2}
\begin{tabular}{lr@{ }lr@{ }lr@{ }lr@{ }l}
\hline
 & \multicolumn{8}{c}{Averages Runtime in seconds} \\
Dimension & \multicolumn{2}{c}{$W_1$} & \multicolumn{2}{c}{$W_2$}  & \multicolumn{2}{c}{$f_{1,2}^{0}$}  & \multicolumn{2}{c}{$f_{2,2}^{0}$}  \\
\hline
$32\times32$    & $0.84$ & $(0.30)$ &$1.06$  & $(0.32)$ &$0.002$ &  $(10^{-4})$ &$0.006$ & $(10^{-4})$\\
$64\times64$    & $21.9$ & $(7.96)$ &$23.4$ & $(8.49)$ &$0.01$ & $(10^{-3})$ &$0.02$ & $(10^{-3})$\\
$128\times128$    & $205.0$ & $(45.9)$ & $199.0$ & $(45.0)$ & $ 0.28$ & $(0.07)$ &$0.63$ & $(0.16)$ \\
$256\times256$    & &&&   & $1.21$ & $(0.40)$ & $2.96$  & $(0.94)$ \\
$512\times512$    & &&&   & $4.74$ & $(1.32)$ & $11.55$ & $(2.84)$ \\
\hline
 \end{tabular}}

\end{table}


\section*{Acknowledgements}
This paper was written within the activities of the GNFM of INDAM. 
The research was partially supported by the Italian Ministry of Education, University and Research (MIUR): 
Dipartimenti di Eccellenza Program (2018--2022) - Dept. of Mathematics ``F. Casorati'', University of Pavia.
 The PhD scholarship of Andrea Codegoni is founded by Sea Vision S.r.l..

%
%


\end{document}